\documentclass[a4paper,11pt]{article}
\usepackage{latexsym}
\usepackage{amssymb}
\usepackage{enumerate}
\usepackage{amsfonts}
\usepackage{amsmath}
\usepackage[dvipdfmx]{graphicx} 
\usepackage[paper=a4paper,left=30mm,right=20mm,top=25mm,bottom=30mm]{geometry}
    
\newcommand{\qed}{$\Box$}

\newenvironment{@abssec}[1]{%
    \if@twocolumn

      \section*{#1}%
    \else

      \vspace{.05in}\footnotesize
      \parindent .2in
 {\upshape\bfseries #1. }\ignorespaces
    \fi}

    {\if@twocolumn\else\par\vspace{.1in}\fi}

\newenvironment{keywords}{\begin{@abssec}{\keywordsname}}{\end{@abssec}}

\newenvironment{AMS}{\begin{@abssec}{\AMSname}}{\end{@abssec}}

\newcommand\keywordsname{Key words}
\newcommand\AMSname{AMS subject classifications}
\newcommand\AMname{AMS subject classification}
\newtheorem{theorem}{Theorem}
 \newtheorem{lemma}[theorem]{Lemma}
 \newtheorem{corollary}[theorem]{Corollary}
 \newtheorem{proposition}[theorem]{Proposition}

\def\qed{\vbox{\hrule height0.6pt\hbox{%
  \vrule height1.3ex width0.6pt\hskip0.8ex
  \vrule width0.6pt}\hrule height0.6pt
 }}

\title{Interaction between nonlinear diffusion and mean curvature of a  surface with initial discontinuities\thanks{This research was partially supported by the Grant-in-Aid
for Scientific Research  (C) ($\sharp$ 26K06856)  of
Japan Society for the Promotion of Science. }}

\author{Shigeru Sakaguchi\thanks{Admissions Center, Tohoku University, 
Sendai, 980-8576, Japan (sigersak@tohoku.ac.jp).}}

\date{}
\begin{document}
\maketitle

\begin{abstract}
We consider the Cauchy problems for a class of nonlinear diffusion equations where each initial data is given by a characteristic function of an open set $\Omega$ in the whole Euclidean space.
If the boundary $\partial\Omega$ is of class of $C^2$ in a neighborhood of a point $x$ on it, then we extract the mean curvature of $\partial\Omega$ at $x$ from the initial diffusion at $x$. This  corresponds to the author's previous result  for the two-phase linear diffusion equation. Applications to some overdetermined problems for nonlinear diffusion equations are given.
 \end{abstract}

\begin{keywords}
nonlinear diffusion equation, Cauchy problem,  initial behavior, mean curvature of of a  surface with initial discontinuities, stationary level surface, overdetermined problems.
\end{keywords}

\begin{AMS}
Primary 35K55 ; Secondary  35B06, 35B40, 35K10,  35K15, 35K59.
\end{AMS}

\pagestyle{plain}
\thispagestyle{plain}


\section{Introduction}
\label{introduction}

In the previous paper \cite{Sa2025},  the author considered the Cauchy problem for  the two-phase linear diffusion equation  in the whole Euclidean space consisting of two media with different constant conductivities, where initial data is given by a characteristic function of one medium $\Omega$. There,  if  the boundary $\partial\Omega$ is of class of $C^2$ in a neighborhood of a point $x$ on it, then the mean curvature of $\partial\Omega$ at $x$ is extracted from the initial diffusion at $x$. The present paper considers a class of nonlinear diffusion equations instead of  the two-phase linear diffusion equation.

To be precise, let $\Omega$ be an open set in $\mathbb R^N$ with $N \ge 2$, and let $p \in \partial\Omega$.  Denote by $B_r(x)$ an open ball in $\mathbb R^N$ with radius $r > 0$  and centered at a point $x \in \mathbb R^N$. Assume that there exists  $\rho > 0$ such that $\partial\Omega\cap B_\rho(p)$ is of class $C^2$ and $\partial\Omega$ separates $B_\rho(p)$ into two connected components. Let $\phi : \mathbb R \to \mathbb R$ satisify
\begin{equation}
\label{nonlinear diffusion}
\phi \in C^2(\mathbb R),\ \phi(0)=0,\ \mbox{ and }\ 0 < \delta_1 \le \phi^\prime(s) \le \delta_2 \ \mbox{ for }  s \in \mathbb R,
\end{equation}
where $\delta_1, \delta_2$ are positive constants, and 
 let $u=u(x,t)$ be the unique bounded solution of the Cauchy problem for the nonlinear diffusion equation:
\begin{equation}
\label{Cauchy problem}
u_t=\Delta \phi(u)\ \mbox{ in }\ \mathbb R^N\times(0,+\infty)\ \mbox{ and }\ u =\mathcal{X}_{\Omega}\ \mbox{ on }\ \mathbb R^N\times\{0\},
\end{equation}
where $\mathcal X_\Omega$ denotes the characteristic function of the set $\Omega$. This Cauchy problem has been dealt with in the papers \cite{MS2010, MS2012, Sa2013} which consider some other  overdetermined problems.
The maximum  principle gives
\begin{equation}
\label{positive values}
0 < u < 1\ \mbox{ in } \mathbb R^N\times (0, + \infty).
\end{equation}

The first purpose of the present paper is to show the following theorem:

\begin{theorem}
\label{th: asymptotic formula in time}  There exists a positive constant $c=c(\delta_1, \delta_2) \le 1$ depending only on $\delta_1, \delta_2$ such that if $\phi \in C^3(\mathbb R)$ satisfies $\max\limits_{-1\le s\le2}\{|\phi^{\prime\prime\prime}(s)|, |\phi^{\prime\prime}(s)|\} \le c$, then 
the following formula holds true for the solution  $u$ of \eqref{Cauchy problem} and every point $x \in \partial\Omega\cap B_{\rho}(p)$:
\begin{equation}
\label{asymptotic formula in time}
\lim_{t \to 0^+}t^{-\frac 12}\left\{u(x,t)-f(0)\right\} = \gamma(\phi) (N-1)H(x)\ \mbox{ and } \ \delta_1f^\prime(0)\le\gamma(\phi)\le \delta_2f^\prime(0),
\end{equation}
where $H(x)$ denotes the mean curvature of $\partial\Omega$ at $x\in\partial\Omega\cap B_\rho(p)$ with respect to the outward normal direction to $\partial\Omega$, $\gamma(\phi)$ is a constant depending only on $\phi$, and $f=f(\xi)$ for $\xi \in \mathbb R$ is the unique solution of
\begin{equation}\nonumber
\begin{cases}
&\left(\phi^\prime(f)f^\prime\right)^\prime+\frac12\xi f^\prime=0 \mbox{ and } f^\prime>0\ \mbox{ in } \mathbb R,\\
 &f(-\infty)=0\ \mbox{ and }\  f(+\infty)=1.
 \end{cases}
 \end{equation}
The convergence in \eqref{asymptotic formula in time} is uniform on $\partial\Omega\cap \overline{B_r(p)}$ for each $0 < r < \rho$.
\end{theorem}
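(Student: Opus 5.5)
We will use a blow-up at $x\in\partial\Omega\cap B_\rho(p)$ followed by a first-order perturbation in the curvature. Fix such an $x$ and choose coordinates with $x=0$ and outward normal $-e_N$, so that near $0$ the set $\Omega$ is $\{y_N> g(y')\}$ with $g(0)=0$, $\nabla g(0)=0$ and $\Delta' g(0)=(N-1)H(x)$ up to the sign convention fixed by the outward normal. Rescale by $v(z,s)=u(\sqrt t\,z, ts)$, i.e.\ use parabolic variables $z=y/\sqrt t$. To zeroth order the data become the half-space characteristic function $\mathcal X_{\{z_N>0\}}$. The corresponding self-similar solution is $f(z_N/\sqrt s)$, which is exactly the ODE in the statement. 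Hence $u(x,t)\to f(0)$, and the task is to identify the $O(\sqrt t)$ correction.

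\textbf{Step 1: barriers from the curved profile.} Introduce the signed distance $d(y)$ to $\partial\Omega$, positive in $\Omega$, and take the ansatz
\[
w^\pm(y,t)=f\!\left(\frac{d(y)\pm K t\,|\cdot|\ \text{(or }\pm Kt^{\frac12}\cdot t^{\frac12}\text{)}}{\sqrt t}\right)+\sqrt t\,\Psi\!\left(\frac{d(y)}{\sqrt t}\right)\Delta d(y)\pm \text{errors}.
\]
Compute $w_t-\Delta\phi(w)$. Using $|\nabla d|=1$, the leading $t^{-1}$ terms cancel by the ODE for $f$. The $t^{-1/2}$ term is $-\phi'(f)f'\,\Delta d$, and this must be cancelled by the corrector $\Psi$. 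So $\Psi$ solves the linearized inhomogeneous ODE
\[
\left(\phi'(f)\Psi\right)''+\tfrac12\xi\Psi'-\tfrac12\Psi=\phi'(f(\xi))f'(\xi)
\]
(schematically), with $\Psi(\pm\infty)=0$. Then $\gamma(\phi)$ is defined as $\pm\Psi(0)$, and the constant depends only on $\phi$. The bounds $\delta_1f'(0)\le\gamma\le\delta_2 f'(0)$ should follow by comparison in the ODE. In the linear case $\phi'\equiv\delta$ one has $\Psi\propto\xi f'$-type explicit solutions, and $\phi'$ trapped between $\delta_1,\delta_2$ sandwiches the solution between the two linear cases. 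Here the smallness of $\phi'',\phi'''$ enters: it makes the nonlinear terms in the linearized operator (involving $\phi''(f)f'$ and derivatives of those) perturbative. This lets one invert the operator with decay and keep $\Psi$ and its derivatives bounded by Gaussian-type weights. This is how the constant $c(\delta_1,\delta_2)$ arises.

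\textbf{Step 2: comparison.} Because $\partial\Omega$ is only $C^2$ near $p$, $\Delta d$ is only continuous. So we replace $\Delta d$ by constants $(N-1)H(x)\pm\varepsilon$ on a ball $B_\eta(x)$ and build sub- and supersolutions there. These have residual of size $o(t^{-1/2})\cdot f'$, which we absorb by adding $\pm\varepsilon\sqrt t$ times a positive profile. They are arranged to lie below and above $u$ at $t=0$ (comparing the characteristic function with the profile at distance $\ge$ a small shift). Outside $B_\eta(x)$, Gaussian-type estimates show that $u$ and the barriers are exponentially close to $0$ or $1$ up to time $t_0$, with errors $O(e^{-c/t})$. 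We then handle the lateral boundary by adding those small errors. The comparison principle for $u_t=\Delta\phi(u)$ on $B_\eta(x)\times(0,t_0)$ gives
\[
\bigl|u(x,t)-f(0)-\sqrt t\,\gamma(\phi)(N-1)H(x)\bigr|\le C\varepsilon\sqrt t ,
\]
and letting $\varepsilon\to0$ yields the formula. Uniformity on $\partial\Omega\cap\overline{B_r(p)}$ follows because the moduli of continuity of $D^2 d$ and the radius $\eta$ are uniform there.

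\textbf{Main obstacle.} The hardest step is Step 1: solving the corrector ODE for nonlinear $\phi$ and verifying that the remaining error terms have a definite sign so that the barriers are genuine. These error terms are quadratic in $\Psi$, contain $\phi''$ and $\phi'''$, and come from derivatives of $\Delta d$. The smallness hypothesis is used exactly to dominate them by the $\varepsilon$-margin term. We would first treat the case $\phi''\equiv0$ to fix the structure, then perturb.
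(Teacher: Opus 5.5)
Your route differs from the paper's, and your Step 2 has a genuine gap. You aim for barriers accurate to $o(\sqrt t)$, built from an explicit self-similar corrector. The paper instead proves only the cruder bound $|u-\psi|\le\Lambda\sqrt t$ (Lemmas \ref{upper and lower barriers} and \ref{A local key estimate from above and below}, with barriers $\psi\pm\Lambda\sqrt t$). It then identifies the $\sqrt t$-coefficient by blow-up, interior H\"older compactness, and uniqueness for the linearized Cauchy problem satisfied by $S^*$. The two routes are compatible. Parabolic scaling and that uniqueness give $S^*(z_N,s)=\sqrt s\,(N-1)H(0)\,\Theta(z_N/\sqrt s)$, where $\Theta$ is the decaying solution of $L\Theta=-\phi'(f)f'$ with $L\Theta:=(\phi'(f)\Theta)''+\tfrac12\xi\Theta'-\tfrac12\Theta$; hence $\gamma(\phi)=\Theta(0)$. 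The gap is this: you compare on $B_\eta(x)\times(0,t_0)$, and $\partial B_\eta(x)$ crosses $\partial\Omega$. Near $\partial B_\eta(x)\cap\partial\Omega$, neither $u$ nor your barriers are close to $0$ or $1$ (there $u\approx f(d/\sqrt t)$), so Gaussian estimates give nothing. What you need there is $|u-f(d/\sqrt t)-\sqrt t\,\kappa\,\Theta(d/\sqrt t)|\le\varepsilon\sqrt t$, which is the theorem itself at nearby boundary points. As written, the argument is circular.

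The paper never uses a lateral boundary that meets the surface. It sandwiches $D_1\subset\Omega\subset D_2$, with bounded $C^2$ boundaries agreeing with $\partial\Omega$ near $p$, and compares on the whole tubular neighborhood $\mathcal N$. The boundary of $\mathcal N$ lies at distance $\delta_0$ from the surface, so Aronson's bounds do apply there. You can repair Step 2 the same way, but then $\Delta d^*$ cannot be frozen to a constant along a closed surface. Instead, use a smooth $\kappa_\varepsilon$ with $|\kappa_\varepsilon-\Delta d^*|\le\varepsilon$ on $\overline{\mathcal N}$; its derivatives only create $O_\varepsilon(1)$ residuals. Take the margin to be the constant $\pm\varepsilon\sqrt t$. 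Its contribution is, schematically, $\pm\varepsilon t^{-1/2}\{\tfrac12-(f')^2\phi'''-f''\phi''\}$, and this is where the smallness of $\phi'',\phi'''$ is really used, exactly as in \eqref{finding constant c}. For $t\le t_0(\varepsilon)$ this absorbs everything. Alternatively, keep $B_\eta(x)$ and add a margin $M\sqrt t\,\chi$, with a cutoff $\chi$ vanishing near $x$, and use the a priori bound $|u-\psi|\le\Lambda\sqrt t$ on $\partial B_\eta(x)$; but that bound again needs the global construction.

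Separately, your argument for $\delta_1f'(0)\le\gamma\le\delta_2f'(0)$ is wrong. For $\phi'\equiv\delta$ the corrector is $\delta f'$, not of $\xi f'$ type. Sandwiching between two linear equations with different diffusivities is not a valid comparison, and it would give bounds in terms of the linear profiles rather than the nonlinear $f$. The right argument uses the identity $Lf'=-f'$, obtained by differentiating the ODE for $f$. This gives $L(\Theta-\delta_1f')\le0\le L(\Theta-\delta_2f')$, and a maximum principle yields $\delta_1f'\le\Theta\le\delta_2f'$. This is the self-similar form of the paper's comparison of $G$ with $\delta_i\,s\,\partial_{z_N}\Psi$.
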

When $\phi(s)\equiv s$, that is, the equation is just the heat equation, this formula can be directly obtained with the aid of the Gaussian kernel. Indeed, in \cite[Theorem 4.1, pp.546--548]{E1993}, by using  the explicit representation of temperature,  Evans shows  that initially the level surface of temperature $\frac 12$ moves with normal velocity $(N-1)H$.

The second purpose of the present paper  is to give some applications  of Theorem \ref{th: asymptotic formula in time} to some overdetermined problems. 
Under the assumption on $\phi$ given by Theorem \ref{th: asymptotic formula in time},
the following corollary follows immediately from Theorem \ref{th: asymptotic formula in time}.


\begin{corollary}
\label{constant mean curvature}  Assume that $\phi \in C^3(\mathbb R)$ and $\max\limits_{-1\le s\le2}\{|\phi^{\prime\prime\prime}(s)|, |\phi^{\prime\prime}(s)|\} \le c$ for the constant $c$  given by Theorem \ref{th: asymptotic formula in time}. Let $u$ be the solution of \eqref{Cauchy problem}. Suppose that $\Gamma$ is a connected component of $\partial\Omega, \ \Gamma$ is of class $C^2$ and there exists a function $a : (0, +\infty) \to (0, +\infty)$ satisfying
\begin{equation}
\label{stationary level}
u(x, t) = a(t)\ \mbox{ for every }\ (x, t) \in \Gamma \times (0, +\infty).
\end{equation}
Then the mean curvature of $\Gamma$ must be constant.
\end{corollary}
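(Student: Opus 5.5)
The plan is to apply Theorem \ref{th: asymptotic formula in time} at every point of $\Gamma$ and to compare the resulting limits. Fix two points $x, y \in \Gamma$. Since $\Gamma$ is a $C^2$ connected component of $\partial\Omega$, each point $q\in\Gamma$ has a radius $\rho_q>0$ such that $\partial\Omega\cap B_{\rho_q}(q)=\Gamma\cap B_{\rho_q}(q)$ is of class $C^2$. One checks that $\partial\Omega$ separates $B_{\rho_q}(q)$ into two connected components: take $\rho_q$ smaller than the distance from $q$ to the closed set $\partial\Omega\setminus\Gamma$, and also small enough that $\Gamma$ is a $C^2$ graph over its tangent plane in the ball. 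Hence the hypotheses placed on $p$ and $\rho$ at the start of the paper hold with $p=q$ and $\rho=\rho_q$.

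By Theorem \ref{th: asymptotic formula in time}, applied with $p=x$ and with $p=y$, we obtain
\begin{equation}\nonumber
\lim_{t\to0^+} t^{-\frac12}\{u(x,t)-f(0)\}=\gamma(\phi)(N-1)H(x)
\quad\mbox{and}\quad
\lim_{t\to0^+} t^{-\frac12}\{u(y,t)-f(0)\}=\gamma(\phi)(N-1)H(y).
\end{equation}
The constant $\gamma(\phi)$ depends only on $\phi$, and the function $f$ is the same in both formulas. By the hypothesis \eqref{stationary level}, $u(x,t)=a(t)=u(y,t)$ for all $t>0$. The two left-hand sides therefore coincide, both being equal to $\lim_{t\to0^+}t^{-\frac12}\{a(t)-f(0)\}$, and we get $\gamma(\phi)(N-1)H(x)=\gamma(\phi)(N-1)H(y)$.

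Finally, $\gamma(\phi)\ge\delta_1 f'(0)>0$ because $f'>0$, and $N\ge2$ gives $N-1>0$. Dividing, we conclude $H(x)=H(y)$, so $H$ is constant on $\Gamma$.

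The only step needing care is the local verification that each point of $\Gamma$ satisfies the standing assumptions, in particular the separation of a small ball into exactly two components. The $C^2$ regularity of $\Gamma$ makes this routine via a local graph representation. No uniformity in $t$ is needed, since pointwise limits suffice.
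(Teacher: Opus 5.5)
Your argument is the paper's: it says the corollary follows immediately from Theorem \ref{th: asymptotic formula in time}. Concretely, you compare the limits at two points of $\Gamma$, where $u$ takes the common value $a(t)$, and divide by $(N-1)\gamma(\phi)\ge (N-1)\delta_1 f'(0)>0$.

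One small correction to your local verification: $\partial\Omega\setminus\Gamma$ need not be closed. Other components can accumulate on $\Gamma$, for instance concentric spheres of radii $1+1/m$ around $\Gamma=\partial B_1(0)$. So your choice of $\rho_q$ tacitly uses that $\Gamma$ is relatively open in $\partial\Omega$. This is the intended reading of the hypothesis. It holds automatically when $\partial\Omega$ has finitely many components, or when $\partial\Omega$ is of class $C^0$ as in Theorem \ref{th:sphere theorem}.
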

In Corollary \ref{constant mean curvature}, the overdetermined condition \eqref{stationary level} means that $\Gamma$ is a stationary level surface.
The following two theorems are examples of the application of Corollary \ref{constant mean curvature}  and Theorem \ref{th: asymptotic formula in time} to the overdetermined problems for nonlinear diffusion equations where the surface with initial discontinuities includes a  stationary level surface of class $C^2$.


\begin{theorem}
\label{th:hyperplane}
 Assume that $\phi \in C^3(\mathbb R)$ and $\max\limits_{-1\le s\le2}\{|\phi^{\prime\prime\prime}(s)|, |\phi^{\prime\prime}(s)|\} \le c$ for the constant $c$  given by Theorem \ref{th: asymptotic formula in time}.
Let $\Omega \subset \mathbb R^N$ be a domain given by
$$
\Omega = \{ x \in \mathbb R^N :\, x_N > \varphi(x_1,\dots,x_{N-1}) \},
$$
where $\varphi$ is a $C^2$ function on $\mathbb R^{N-1}$. Let $u$ be the solution of \eqref{Cauchy problem}. Suppose that there exists a function $a : (0, +\infty) \to (0, +\infty)$ satisfying
\begin{equation}
\label{stationary level on entire graph}
u(x, t) = a(t)\ \mbox{ for every }\ (x, t) \in \partial\Omega \times (0, +\infty).
\end{equation}
Then, if $N=2$, $\partial\Omega$ must be a straight line, if $3 \le N \le 8$, $\partial\Omega$ must be a hyperplane, and if $N\ge 9$ and $\nabla \varphi$ is bounded, $\partial\Omega$ must be a hyperplane.
\end{theorem}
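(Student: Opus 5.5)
We derive this from Corollary \ref{constant mean curvature} together with the classical Bernstein-type theorems for entire constant mean curvature graphs. Since $\partial\Omega$ is the graph of the $C^2$ function $\varphi$ over all of $\mathbb R^{N-1}$, it is connected. It is also of class $C^2$ in a neighborhood of each of its points. Moreover, $\partial\Omega$ separates every sufficiently small ball centered on it into two connected components, so the hypotheses of Theorem \ref{th: asymptotic formula in time} hold locally at every point of $\partial\Omega$. Hence Corollary \ref{constant mean curvature}, applied with $\Gamma=\partial\Omega$ and condition \eqref{stationary level on entire graph}, shows that the mean curvature $H$ of $\partial\Omega$ is a constant $H_0$. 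In other words, $\varphi$ solves
$$
\mathrm{div}\left(\frac{\nabla\varphi}{\sqrt{1+|\nabla\varphi|^2}}\right)=-(N-1)H_0 \quad\mbox{in } \mathbb R^{N-1},
$$
where the sign depends on the orientation (outward normal to $\Omega$ points downward).

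The next step is to show that $H_0=0$. The standard argument for entire graphs is as follows. Integrate the equation over the ball $B_R\subset\mathbb R^{N-1}$ and apply the divergence theorem. The boundary flux is bounded by $|\partial B_R| = c_N R^{N-2}$, because the vector field $\nabla\varphi/\sqrt{1+|\nabla\varphi|^2}$ has length less than $1$. The left side equals $(N-1)|H_0|\,|B_R| = c'_N |H_0| R^{N-1}$. Letting $R\to\infty$ forces $H_0=0$, using only that $\varphi$ is defined on all of $\mathbb R^{N-1}$. So $\partial\Omega$ is an entire minimal graph.

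Finally, we invoke Bernstein-type results. For $N=2$ the equation reduces to $\varphi''=0$ on $\mathbb R$, so $\partial\Omega$ is a straight line. For $3\le N\le 8$, i.e.\ graphs over $\mathbb R^{n}$ with $n=N-1\le 7$, the theorem of Bernstein, Fleming, De Giorgi, Almgren and Simons says that every entire solution of the minimal surface equation is affine, so $\partial\Omega$ is a hyperplane. For $N\ge 9$, Bombieri, De Giorgi and Giusti constructed non-affine entire minimal graphs, so an additional hypothesis is needed. With $\nabla\varphi$ bounded, Moser's theorem applies: an entire solution of the minimal surface equation with bounded gradient is affine. This follows because each $\partial_i\varphi$ solves a uniformly elliptic divergence-form equation, so the De Giorgi–Nash Liouville theorem makes it constant. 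Hence $\partial\Omega$ is again a hyperplane.

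The main potential obstacle is the first step: checking that Corollary \ref{constant mean curvature} really applies to the unbounded, noncompact $\partial\Omega$. Theorem \ref{th: asymptotic formula in time} is local, with uniform convergence on compact pieces, so constancy of $a(t)$ along $\partial\Omega$ gives $\gamma(\phi)(N-1)H(x)=\lim_{t\to0^+}t^{-1/2}(a(t)-f(0))$ independent of $x$. Since $\gamma(\phi)\ge\delta_1 f'(0)>0$, the curvature $H$ is constant. The remaining steps are classical.
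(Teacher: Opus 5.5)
Your proposal is correct and follows essentially the same route as the paper. Corollary \ref{constant mean curvature} gives constant mean curvature, the divergence theorem on the entire graph forces $H=0$, and then $N=2$ is elementary, $3\le N\le 8$ follows from Bernstein's theorem, and the bounded-gradient case follows from Moser's theorem; your remarks on applying the corollary to the noncompact graph and the explicit flux estimate just fill in details the paper leaves implicit.
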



\begin{theorem}
\label{th:sphere theorem}
 Assume that $\phi \in C^3(\mathbb R)$ and $\max\limits_{-1\le s\le2}\{|\phi^{\prime\prime\prime}(s)|, |\phi^{\prime\prime}(s)|\} \le c$ for the constant $c$  given by Theorem \ref{th: asymptotic formula in time}.
Let $\Omega \subset \mathbb R^N$ be an open set, whose boundary $\partial\Omega$ is of class $C^0$ and has a bounded connected component $\Gamma$ of class $C^2$. Suppose that
either the inside of $\Gamma$ or the outside of $\Gamma$ is included in  one of the two sets, $\Omega$ and $\mathbb R^N\setminus\overline{\Omega}$.
Let  $u$ be the solution of \eqref{Cauchy problem}. Suppose that there exists a function $a : (0, +\infty) \to (0, +\infty)$ satisfying
\begin{equation}
\label{stationary level surface with one C2 bounded part}
u(x, t) = a(t)\ \mbox{ for every }\ (x, t) \in \partial\Omega \times (0, +\infty).
\end{equation}
Then $\partial\Omega$ must be a sphere.
\end{theorem}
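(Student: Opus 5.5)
By Corollary \ref{constant mean curvature}, applied to the component $\Gamma$, the mean curvature of $\Gamma$ is constant. Since $\Gamma$ is a bounded connected closed $C^2$ hypersurface in $\mathbb R^N$, it is embedded and compact. Alexandrov's soap bubble theorem then shows that $\Gamma$ is a sphere, say $\Gamma=\partial B_R(z)$. It remains to show that $\partial\Omega=\Gamma$, i.e.\ that $\partial\Omega$ has no other components.

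The plan is to use the stationarity of the level surface together with the comparison principle. Suppose, say, that the inside $B_R(z)$ of $\Gamma$ is contained in $\Omega$; the other three cases are handled symmetrically, replacing $u$ by $1-u$, which solves $v_t=\Delta\tilde\phi(v)$ with $\tilde\phi(s)=-\phi(1-s)+\phi(1)$. The function $\tilde\phi$ satisfies the same structural hypotheses, including the smallness of $\tilde\phi''$ and $\tilde\phi'''$.

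\emph{Step 1: exploit $u=a(t)$ on $\partial\Omega$.} Set $D=\mathbb R^N\setminus\overline{B_R(z)}$. By rotation invariance of the equation and uniqueness, I would first show that $u$ is radially symmetric about $z$ in $D$, or at least compare $u$ there with a radial function. A cleaner route is to define $w$ as the solution of the Cauchy problem with initial data $\mathcal X_{B_R(z)}$, which is radial. Both $u$ and $w$ then solve the same equation in $B_R(z)\times(0,\infty)$ with the same initial data $1$. If we knew that $w=a$ on $\Gamma$, uniqueness of the Dirichlet problem would give $u=w$ inside $B_R(z)$.

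This is circular, so instead I would use the following idea. If $\partial\Omega$ had another component $\Gamma'$, then $\Omega$ would differ from $B_R(z)$ in $D$, so $u\ge w$ with strict inequality by the strong comparison principle (initial data $\mathcal X_\Omega\ge\mathcal X_{B_R(z)}$, not equal). Hence $a(t)=u|_\Gamma>w|_\Gamma$.

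\emph{Step 2: derive a contradiction from behaviour as $t\to0^+$ or as $t\to\infty$.} Since $\Gamma'$ is at a positive distance from $\Gamma$, the constraint $u=a(t)$ on $\Gamma'$ must be used. Near points of $\Gamma'$ the boundary is only $C^0$, so Theorem \ref{th: asymptotic formula in time} is not directly applicable there. However, I expect $u(x,t)\to f(0)$ as $t\to0^+$ on $\Gamma$, with $a(t)=f(0)+\gamma(N-1)H\sqrt t+o(\sqrt t)$. Meanwhile, the difference $u-w$ on $\Gamma$ is exponentially small in $t$, of order $e^{-c d^2/t}$ with $d=\operatorname{dist}(\Gamma,\Gamma')$. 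This by itself gives no contradiction.

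The better route is the large-time behaviour. Outside a compact set, $\Omega$ is contained either in a bounded region or in its complement. The value $a(t)$ on $\Gamma$ and on $\Gamma'$ must coincide for all $t$, while the local geometry differs; for instance, the outermost component of $\partial\Omega$ would bound a set on whose boundary $u$ would have to be both a maximum-principle subsolution and a supersolution. Concretely, I would apply the comparison principle in the region between $\Gamma$ and the nearest other component: $u$ takes the same value $a(t)$ on both boundary pieces, and the initial datum is constant ($0$ or $1$) there. If the datum is $1$, uniqueness for the Dirichlet problem gives $u$ equal to a function whose value $\ge a$ is forced inside, but Hopf's lemma applied at $\Gamma$ from both sides contradicts the radial monotonicity of $u$ across $\Gamma$. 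If the datum is $0$, a symmetric argument applies.

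The main obstacle is Step 2: ruling out extra components of $\partial\Omega$ when those components are merely $C^0$. I would first try the annular comparison and Hopf-lemma argument, and would fall back on a sliding-ball or moving-plane argument adapted to the constant boundary value $a(t)$.
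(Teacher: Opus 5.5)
Your first step (Corollary~\ref{constant mean curvature} plus Alexandrov's theorem, giving $\Gamma=\partial B_R(z)$) is what the paper does. The remaining step, excluding further components of $\partial\Omega$, is not proved, and the route you sketch does not produce a contradiction.

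The strict inequality $u>w$ on $\Gamma$ and the exponentially small discrepancy as $t\to0^+$ are correct, but, as you note, they lead nowhere. The annular comparison/Hopf argument fails on its own terms:
\begin{itemize}
\item In the case where the inside of $\Gamma$ lies in $\Omega$, the region adjacent to $\Gamma$ on the outside is a component of $\mathbb R^N\setminus\overline\Omega$. Its boundary may consist of many, possibly unbounded, merely $C^0$ pieces, so ``the region between $\Gamma$ and the nearest other component'' is not a controlled object.
\item Even in the model case of a concentric sphere, Hopf's lemma on the two sides of $\Gamma$ only says that $u$ decreases outward across $\Gamma$. That is perfectly consistent, so nothing clashes.
\item The ``radial monotonicity of $u$ across $\Gamma$'' you invoke was never established.
\end{itemize}
The only thing that distinguishes $\Gamma$ from a concentric sphere of another radius is the $\sqrt t$-term in Theorem~\ref{th: asymptotic formula in time}. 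Using it requires $C^2$ regularity of the other component, which is exactly what you lack.

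The missing idea is to propagate symmetry from the ball to all of space. Your circularity comes from comparing with the Cauchy solution $w$. It disappears if you look at $u$ inside the ball, not in the exterior, where nothing is known about $\mathcal X_\Omega$. In $B_R(z)\times(0,\infty)$, $u$ solves $u_t=\Delta\phi(u)$ with initial value $1$ and lateral value $a(t)$, which is constant on $\Gamma$. Rotation invariance plus uniqueness for this initial--boundary value problem then makes $u$ radial in $B_R(z)$.

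Now take $z=0$, any orthogonal $M$, and $v(x,t)=u(Mx,t)$. The difference $w=u-v$ solves the linear equation $w_t=\mathrm{div}(\phi'(u)\nabla w)+b\nabla v\cdot\nabla w+\mathrm{div}(b\nabla v)\,w$, with $b=\int_0^1\phi''(v+\theta w)\,d\theta$. It vanishes on the open set $B_R(0)\times(0,\infty)$. Parabolic unique continuation \cite{EF2003} gives $w\equiv0$, so $u$ is radial on $\mathbb R^N\times(0,\infty)$.

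Letting $t\to0^+$, $\mathcal X_\Omega$ is radial. Since $\partial\Omega$ is $C^0$, any other component $\Gamma^*$ is a sphere centred at $0$, hence $C^2$. Theorem~\ref{th: asymptotic formula in time} on $\Gamma$ and $\Gamma^*$, with the same $a(t)$ and $\gamma(\phi)>0$, then forces $H(\Gamma^*)=H(\Gamma)$, i.e.\ equal radii, a contradiction.

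Your reduction via $1-u$ only exchanges $\Omega$ and $\mathbb R^N\setminus\overline\Omega$. It does not cover the case where the \emph{outside} of $\Gamma$ lies in one of these sets. That case is handled by running the same argument from the exterior region, using uniqueness of bounded solutions there.
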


The rest of the paper is organized as follows. 
Section \ref{section_nonlinear diffusion equations} is devoted to introducing a class of nonlinear diffusion equations and self-similar solutions which  play a key role later. In section \ref{section_sub_super solutions for bounded domains}, we construct sub- and supersolutions for domains with bounded $C^2$ boundaries. The Gaussian bounds due to Aronson for the fundamental solution of $ w_t=\mbox{div}(\phi^\prime(v)\nabla w)$ for each fixed function $v=v(x,t)$ and the self-similar solutions given in the previous section are useful. 

Section \ref{section_Proof_of_Theorem 1.1} is devoted to the proof of Theorem \ref{th: asymptotic formula in time} and it consists of seven subsections. Let us mention a significant difference between the present paper and the previous paper \cite{Sa2025}. In \cite{Sa2025}, by applying the Gaussian bounds to the difference of two solutions because of linearity, we directly reduce the problem to the case where $\Omega$ is a bounded $C^2$ domain, but in the present paper, we cannot do the same because of nonlinearity. Nevertheless, in subsection \ref{subsection A local key estimate}, we can estimate the solution locally from above and below by modified sub- and supersolutions in Lemma \ref{A local key estimate from above and below}, which is different from \cite[Lemma 3.1]{Sa2025}.
Lemma \ref{A local key estimate from above and below} in the present paper is local, but \cite[Lemma 3.1]{Sa2025} is global. 

Once this key lemma, Lemma \ref{A local key estimate from above and below}, is established, we can follow the steps in \cite[section 4]{Sa2025}. We employ in principle the blow-up arguments due to Ni-Takagi \cite{NT1993} which succeeded to extract the mean curvature of the boundary from the asymptotic behavior of the least-energy solutions of a singularly perturbed semilinear elliptic Neumann problem.
 We first introduce a principal coordinate system at each point on the boundary $\partial\Omega\cap B_\rho(p)$ together with the mean curvature of $\partial\Omega$.  Next we straighten $\partial\Omega$ locally and then introduce the standard parabolic scaling with a small positive parameter $\varepsilon$ for our blow-up arguments. To perform further asymptotics for the scaled solution $v^\varepsilon$ as $\varepsilon \to 0^+$, we utilize the standard interior H\"older estimates \cite{Li1996} for the second order parabolic equations of divergence form to guarantee  compactness to employ the blow-up arguments. 
 (In \cite{Sa2025}, because of the discontinuity of the diffusion coefficient on the interface, Dong's interior estimates \cite{Dong2012} are utilized.) The most important is the analysis of the second term $S^\varepsilon$ given by \eqref{function for the second mean curvature term} in the asymptotic behavior of $v^\varepsilon$ as $\varepsilon \to 0^+$. Both Lemma \ref{the zeroth-order approximation} coming from Lemma \ref{A local key estimate from above and below} and the preliminary estimate \eqref{Holder norm of coefficients for the equation} play a key role in utilizing the interior estimates. Once interior estimates are obtained, the uniqueness of the limit functions guaranteed by the comparison principle for the Cauchy problems plays a key role. Here,  the limit function $v^*$ is explicit (see (ii) in Lemma \ref{the zeroth-order approximation} with \eqref{one-dimensional self-similar solution}) and the other limit function $S^*$ is uniquely determined with some estimates \eqref{estimate of function G} from above and below. 
 Finally, the uniform convergence on the boundary  in Theorem \ref{th: asymptotic formula in time} also follows from the blow-up arguments.
In section \ref{section_Applications} we prove Theorems  \ref{th:hyperplane} and \ref{th:sphere theorem}. The proof of Theorem \ref{th:hyperplane}
is the same as in \cite{Sa2025}, but the proof of Theorem \ref{th:sphere theorem} becomes simpler because of the unique continuation theorems
for parabolic operators ( \cite[Theorem 1, p. 37]{EF2003} or its stronger versions \cite[Theorem 3, p. 1599]{F2003} and  \cite[Theorem 1, p. 500]{AV2003}).

\setcounter{equation}{0}
\setcounter{theorem}{0}

\section{A class of nonlinear diffusion equations and one-dimensional self-similar solutions}
\label{section_nonlinear diffusion equations}

Let $\phi : \mathbb R \to \mathbb R$ satisify
\begin{equation}
\label{nonlinear diffusion C3}
\phi \in C^3(\mathbb R),\ \phi(0)=0,\ \mbox{ and }\ 0 < \delta_1 \le \phi^\prime(s) \le \delta_2 \ \mbox{ for } s \in \mathbb R,
\end{equation}
where $\delta_1, \delta_2$ are positive constants. The class of nonlinear diffusion equations we deal with in the present paper is obtained by another additional restriction
\begin{equation}\label{another restriction}
\max_{-1\le s\le2}\{|\phi^{\prime\prime\prime}(s)|, |\phi^{\prime\prime}(s)|\} \le c,
\end{equation}
where $c$ is a positive constant depending only on $\delta_1$ and $\delta_2$ to be given in Lemma \ref{upper and lower barriers} in section \ref{section_sub_super solutions for bounded domains}. 
Let $f=f(\xi)$ for $\xi \in \mathbb R$ be the unique solution of the problem stated in Theorem \ref{th: asymptotic formula in time}:
\begin{equation}\label{for selfsimilar solutions}
\begin{cases}
&\left(\phi^\prime(f)f^\prime\right)^\prime+\frac12\xi f^\prime=0 \mbox{ and } f^\prime>0\ \mbox{ in } \mathbb R,\\
 &f(-\infty)=0\ \mbox{ and }\  f(+\infty)=1.
 \end{cases}
 \end{equation}
 The existence and uniqueness of $f$ follows along the same lines as in \cite[Appendix A, pp. 249--251]{MS2012} with the aid of a result in \cite{AP1974}.
 Let us define the function $\Psi =\Psi(s,t)$ by
 \begin{equation}\label{one-dimensional self-similar solution}
 \Psi(s,t)=f(t^{-\frac12}s)\ \mbox{ for } s \in \mathbb R\mbox{ and } t > 0.
 \end{equation}
Then $\Psi$ is the unique bounded solution of  the one-dimensional Cauchy problem:
\begin{equation}\label{one-dimensional Cauchy}
\Psi_t=\partial_s^2\phi(\Psi)\ \mbox{ in } \mathbb R\times (0, +\infty)\ \mbox{ and }\ \Psi =\mathcal X_{(0, +\infty)}\ \mbox{ on } \mathbb R \times \{0\}.
\end{equation}
$\Psi$ is called a self-similar solution.
When $\phi(s)\equiv s$ in particular, $u$ satisfies the heat equation $u_t=\partial_s^2u$ and $f=f(\xi)$ is explicitly solved. Thus we set, for $s \in \mathbb R, t > 0$ and $\xi \in \mathbb R$, 
\begin{equation}
\label{the heat equation}
u^*(s,t) = f_*(t^{-\frac12}s)\ \mbox{ and } f_*(\xi) =  \frac 1{2\sqrt{\pi}}\int_{-\infty}^\xi e^{-\eta^2/4}d\eta.
\end{equation}
Then we notice that $u^*$ and $f_*$ satisfy
\begin{eqnarray}
&&u^*_t=\partial_s^2u^*\ \mbox{ in } \mathbb R\times (0, +\infty)\ \mbox{ and }\ u^* =\mathcal X_{(0, +\infty)}\ \mbox{ on } \mathbb R \times \{0\},\label{heat equation}\\
&&f_*^{\prime\prime}+\frac 12\xi f_*^\prime=0,\ 0 < f_*^\prime \le \frac 1{2\sqrt{\pi}} \mbox{ and }\ 0 < f_* < 1\ \mbox{ in } \mathbb R, \label{equation of selfsimilar solution and monotomicity}\\
&&f_*(-\infty) =0,\ f_*(0) = \frac 12,\ f_*(+\infty) =1,\ f_*^\prime(0)=\frac 1{2\sqrt{\pi}} \mbox{ and }  f_*^\prime(\pm\infty)=0.\label{properties of selfsimilar}
\end{eqnarray}
Therefore, setting $W(s,t)=u^*(s,\delta_2t)$ for  $s \in \mathbb R$ and $ t > 0$ yields that the function $W=W(s,t)$ satisfies 
\begin{align}
W_t&=\delta_2\partial_s^2W &\mbox{ in } &\mathbb R\times (0, +\infty),\label{Eq of W}\\
W &=\mathcal X_{(0, +\infty)} &\mbox{ on } &\mathbb R \times \{0\}, \label{initial condition}\\
W&=\frac12 &\mbox{ on } &\{0\}\times  (0, +\infty).\label{boundary condition}
\end{align}
Set $U=\phi(\Psi)$ and $V=\phi(1)-\phi(\Psi)$. Then we observe that
\begin{align}
&U_t=\phi^\prime(\Psi)\partial_s^2 U\ \mbox{ and } V_t=\phi^\prime(\Psi)\partial_s^2 V\ &\mbox{ in } \mathbb R\times (0, +\infty),\\
&U_t > 0\mbox{ and }\partial_s^2U>0 \ &\mbox{ in } (-\infty,0)\times  (0, +\infty),\\
&V_t> 0\mbox{ and }\partial_s^2V>0 \ &\mbox{ in } (0, +\infty)\times  (0, +\infty),\\
&U=\phi(1)\mathcal X_{(0,+\infty)}\mbox{ and }V=\phi(1)\mathcal X_{(-\infty,0)} &\mbox{ on }\mathbb R \times \{0\}, \\
&U=\phi(f(0)) \mbox{ and } V=\phi(1)-\phi(f(0)) \ &\mbox{ on } \{0\}\times  (0, +\infty).
\end{align}
Hence it follows from the comparison principle that
\begin{align}
&0 < U \le 2\phi(1) W           \!\!\!          &\mbox{ in } &(-\infty,0)\times(0,+\infty),\label{bound for U}\\ 
 &0 < V \le 2\phi(1)(1-W)\!\!\! &\mbox{ in } &(0,\infty)\times(0,+\infty).\label{bound for V}
\end{align}
Namely,
\begin{align}
&0 < \phi(f(\xi)) \le \pi^{-\frac12}\phi(1)\int_{-\infty}^{\delta_2^{-\frac12}\xi} e^{-\eta^2/4}d\eta               &\mbox{ for } & \xi < 0,\label{bound for U in xi}\\ 
 &0 < \phi(1)-\phi(f(\xi)) \le  \pi^{-\frac12}\phi(1)\int^{\infty}_{\delta_2^{-\frac12}\xi} e^{-\eta^2/4}d\eta  &\mbox{ for  } & \xi > 0.\label{bound for V in xi}
\end{align}
Therefore, since $\phi(0)=0$ and $(\phi^{-1})^\prime\le\delta_1^{-1}$, we observe that
\begin{align}
&0 < f(\xi) \le \pi^{-\frac12}\delta_1^{-1}\phi(1)\int_{-\infty}^{\delta_2^{-\frac12}\xi} e^{-\eta^2/4}d\eta               &\mbox{ for } & \xi < 0,\label{bound for f in xi}\\ 
 &0 < 1-f(\xi) \le  \pi^{-\frac12}\delta_1^{-1}\phi(1)\int^{\infty}_{\delta_2^{-\frac12}\xi} e^{-\eta^2/4}d\eta  &\mbox{ for  } & \xi > 0.\label{bound for 1-f in xi}
\end{align}

\begin{proposition}
\label{bounds on f} 
There exists a constant $k=k(\delta_1,\delta_2) \ge 1$ depending only on $\delta_1$ and $\delta_2$ such that if $\max\limits_{0\le s \le 1}|\phi^{\prime\prime}(s)| \le 1$, then $\sup\limits_{\xi \in \mathbb R}\{ f(\xi), f^\prime(\xi), |f^{\prime\prime}(\xi)|\} \le k$.
\end{proposition}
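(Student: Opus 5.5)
The plan is to work with the flux $g(\xi)=\phi^\prime(f(\xi))f^\prime(\xi)$, which turns the equation in \eqref{for selfsimilar solutions} into a first order linear ODE for $g$. The bound on $f$ itself is immediate: since $f^\prime>0$, $f(-\infty)=0$ and $f(+\infty)=1$, we have $0<f<1$ on $\mathbb R$, so $\sup f\le 1\le k$.

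\textbf{Bound on $f^\prime$.} From \eqref{for selfsimilar solutions}, $g^\prime=-\frac12\xi f^\prime=-\frac{\xi}{2\phi^\prime(f)}\,g$, so
$$
g(\xi)=g(0)\exp\Bigl(-\int_0^\xi\frac{\eta}{2\phi^\prime(f(\eta))}\,d\eta\Bigr).
$$
Since $\eta/\phi^\prime(f(\eta))$ has the sign of $\eta$ and $\delta_1\le\phi^\prime\le\delta_2$, the integral lies between $\xi^2/(4\delta_2)$ and $\xi^2/(4\delta_1)$. Hence
$$
g(0)e^{-\xi^2/(4\delta_1)}\le g(\xi)\le g(0)e^{-\xi^2/(4\delta_2)}.
$$
To bound $g(0)$ I would use the normalization coming from the boundary conditions:
$$
1=\int_{\mathbb R}f^\prime\,d\xi=\int_{\mathbb R}\frac{g}{\phi^\prime(f)}\,d\xi\ge\frac{g(0)}{\delta_2}\int_{\mathbb R}e^{-\xi^2/(4\delta_1)}\,d\xi=\frac{2\sqrt{\pi\delta_1}}{\delta_2}\,g(0).
$$
This gives $g(0)\le\delta_2/(2\sqrt{\pi\delta_1})$. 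Therefore
$$
0<f^\prime\le\frac{g}{\delta_1}\le\frac{\delta_2}{2\delta_1\sqrt{\pi\delta_1}}\,e^{-\xi^2/(4\delta_2)}.
$$
This step uses only \eqref{nonlinear diffusion C3}, not the hypothesis on $\phi^{\prime\prime}$.

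\textbf{Bound on $f^{\prime\prime}$.} Expanding the equation gives
$$
\phi^\prime(f)f^{\prime\prime}=-\phi^{\prime\prime}(f)(f^\prime)^2-\tfrac12\xi f^\prime.
$$
Since $0<f<1$, the hypothesis $\max_{0\le s\le1}|\phi^{\prime\prime}(s)|\le1$ gives $|\phi^{\prime\prime}(f)|\le1$. Hence
$$
|f^{\prime\prime}|\le\delta_1^{-1}\bigl((f^\prime)^2+\tfrac12|\xi|f^\prime\bigr).
$$
The first term is controlled by the $f^\prime$ bound. For the second, the Gaussian decay just obtained gives
$$
|\xi|f^\prime\le\frac{\delta_2}{2\delta_1\sqrt{\pi\delta_1}}\,\sup_\xi|\xi|e^{-\xi^2/(4\delta_2)}=\frac{\delta_2}{2\delta_1\sqrt{\pi\delta_1}}\sqrt{2\delta_2}\,e^{-1/2}.
$$

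Taking $k$ to be the maximum of $1$ and the constants above yields a bound depending only on $\delta_1$ and $\delta_2$. The only point needing care is that the $f^\prime$ bound must come with Gaussian decay; otherwise the term $\xi f^\prime$ in $f^{\prime\prime}$ could not be controlled uniformly. The explicit integration of the flux equation supplies exactly this decay, so I expect no real obstacle beyond it.
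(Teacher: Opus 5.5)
Your proof is correct, and its skeleton matches the paper's. The paper also works with the flux $h^\prime=\phi^\prime(f)f^\prime$ (your $g$) and integrates $(\log h^\prime)^\prime=-\xi/(2\phi^\prime(f))$ to get $h^\prime(\xi)\le h^\prime(0)e^{-\xi^2/(4\delta_2)}$. It then reads off $f^{\prime\prime}$ from the expanded equation using $|\phi^{\prime\prime}(f)|\le 1$, exactly as you do.

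The genuine difference is how you bound $g(0)=h^\prime(0)$. The paper integrates the equation over $(\xi,0)$, integrates by parts, and lets $\xi\to-\infty$ to obtain the identity \eqref{identity for h}, $h^\prime(0)=\frac12\int_{-\infty}^0 f(\eta)\,d\eta$. It then controls this integral by the Gaussian tail estimate \eqref{bound for f in xi}. That estimate was obtained beforehand from the PDE comparison principle, comparing $\phi(\Psi)$ with the rescaled heat solution $W$, together with $\phi(1)\le\delta_2$.

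You instead keep the lower bound $g(\xi)\ge g(0)e^{-\xi^2/(4\delta_1)}$ from the same integration and pair it with the normalization $\int_{\mathbb R}f^\prime\,d\xi=1$. This buys several things:
\begin{itemize}
\item It is a purely ODE argument, needing no comparison principle and no prior tail bounds.
\item Its constant $\delta_2/(2\sqrt{\pi\delta_1})$ is sharp in the linear case $\phi(s)=s$, where it returns exactly $f_*^\prime(0)=1/(2\sqrt{\pi})$.
\item Integrating your Gaussian bound on $f^\prime$ from $\mp\infty$ would itself reproduce tail bounds of the form \eqref{bound for f in xi}--\eqref{bound for 1-f in xi}, up to the constant.
\end{itemize}
The paper's route, by contrast, costs nothing extra in context, because those tail estimates are needed anyway later for the exponential decay of $\psi$ on $\partial\mathcal N$.
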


\noindent
{\it Proof.} Since $0 < f < 1$ from \eqref{for selfsimilar solutions}, we may choose $k \ge 1$ with respect to $f$. Let us estimate $f^\prime$. It follows from \eqref{for selfsimilar solutions} that
\begin{equation}\label{maximum of phi prime f prime}
0 < \phi^\prime(f)f^\prime \le \phi^\prime(f(0))f^\prime(0)\ \mbox{ in } \mathbb R.
\end{equation}
Let us introduce a positive function $h=h(\xi)$ for $\xi \in \mathbb R$ by $h(\xi) = \phi(f(\xi))$. Then $h^\prime=\phi^\prime(f)f^\prime > 0, h^\prime(0)=\phi^\prime(f(0))f^\prime(0)$ and
\begin{equation}\label{quotient function}
(\log h^\prime)^\prime = -\frac 1{2\phi^\prime(f)}\xi \ \mbox{ for } \xi \in \mathbb R.
\end{equation}
Integrating \eqref{quotient function} yields that 
\begin{equation}\label{estimate of h prime}
0 < h^\prime(\xi) \le h^\prime(0)\exp\left\{-\frac {\xi^2}{4\delta_2}\right\}\ \mbox{ for  every } \xi \in \mathbb R,
\end{equation}
and hence, $\lim\limits_{\xi \to \pm\infty}h^\prime(\xi) = 0$. On the other hand, by integrating the equation in \eqref{for selfsimilar solutions} we obtain 
\begin{equation}\label{direct integration of original ode}
h^\prime(0) - h^\prime(\xi) = -\frac 12\int_\xi^0\eta f^\prime(\eta)d\eta=\frac12\left(\xi f(\xi)+\int_\xi^0f(\eta)d\eta\right) \mbox{ for every } \xi < 0.
\end{equation}
Thus, with the aid of \eqref{bound for f in xi}, letting $\xi \to -\infty$ gives
\begin{equation}\label{identity for h}
h^\prime(0) = \frac 12\int_{-\infty}^0f(\eta) d\eta.
\end{equation}
On the other hand, since $\delta_1\le \phi^\prime\le \delta_2$ and $\phi(0)=0$, we notice that
\begin{equation}\label{bound of phi f}
0 < h=\phi(f) <\phi(1) \le \delta_2\ \mbox{ in }\mathbb R.
\end{equation}
Therefore, combining \eqref{maximum of phi prime f prime}, \eqref{identity for h}, \eqref{bound for f in xi} and \eqref{bound of phi f} with \eqref{estimate of h prime} yields that there exists a positive constant $k_1$ depending only on $\delta_1$ and $\delta_2$ satisfying
\begin{equation}\label{estimates for phi prime f prime and f prime}
0 < \phi^\prime(f)f^\prime \le k_1\exp\left\{-\frac {\xi^2}{4\delta_2}\right\} \mbox{ and } 0< f^\prime \le  {k_1}\exp\left\{-\frac {\xi^2}{4\delta_2}\right\}\ \mbox{ for every }\xi \in \mathbb R.
\end{equation}
Moreover, by the equation  in \eqref{for selfsimilar solutions}, we have
\begin{equation}\label{estimate second derivative of f}
f^{\prime\prime}(\xi)= -\frac 1{\phi^\prime(f(\xi))}\left\{\phi^{\prime\prime}(f(\xi)) (f^\prime(\xi))^2+\frac 12\xi f^\prime(\xi)\right\}\ \mbox{ for every }\xi \in \mathbb R.
\end{equation}
Finally, combining \eqref{estimates for phi prime f prime and f prime} and  \eqref{estimate second derivative of f} with the assumption that $\max\limits_{0\le s \le 1}|\phi^{\prime\prime}(s)| \le 1$ yields the conclusion. \qed

\setcounter{equation}{0}
\setcounter{theorem}{0}

\section{Sub- and supersolutions for domains with bounded $C^2$ boundaries}
\label{section_sub_super solutions for bounded domains}

Let $D$ be a domain with bounded $C^2$ boundary $\partial D$ in $\mathbb R^N$ where $N\ge 2$ and let $v=v(x,t)$ be the unique bounded solution  of \eqref{Cauchy problem} where $\Omega$ is replaced with $D$.  Namely, we have
\begin{equation}
\label{Cauchy problem for bounded D}
v_t=\Delta \phi(v)\ \mbox{ in }\ \mathbb R^N\times(0,+\infty)\ \mbox{ and }\ v =\mathcal{X}_{D}\ \mbox{ on }\ \mathbb R^N\times\{0\},
\end{equation}
where $\mathcal X_D$ denotes the characteristic function of the set $D$.
The purpose of this section is to construct sub and super solutions for  this problem in a neighborhood of $\partial D$.  This is a key throughout this paper.
We introduce the signed distance function $d^*=d^*(x)$ of $x \in \mathbb R^N$ to the boundary $\partial D$ by
\begin{equation}
\label{signed distance}
d^*(x) = \begin{cases} \mbox{ dist}(x, \partial D)  \!\!&\mbox{ if }\  x \in  D,\\  - \mbox{dist}(x, \partial D)  \!\! &\mbox{ if }\  x \in \mathbb R^N \setminus D.
\end{cases}
\end{equation}
Since $\partial D$ is compact and of class $C^2$, there exists a number $\delta_0 > 0$ such that $d^*(x)$ is of class $C^2$ on the closure of a bounded neighborhood $\mathcal N$ of $\partial D$ given by
\begin{equation}
\label{Neighborhood of the boundary} 
\mathcal N = \{ x \in \mathbb R^N\, :\, -\delta_0 < d^*(x) < \delta_0 \}.
\end{equation}

Define the function $\psi=\psi(x, t)$ by 
\begin{equation}\label{approximate solution near the boundary}
\psi(x,t) = f\left(t^{-\frac12}d^*(x)\right) \mbox{ for } (x,t) \in \mathbb R^N\times(0,+\infty),
\end{equation}
where the function $f$ is given in \eqref{for selfsimilar solutions}. Since $|\nabla d^*| = 1$ on $\overline{\mathcal N}$, a straightforward computation yields that
\begin{equation}\label{diffusion equation for approximate solution}
\psi_t- \Delta \phi(\psi) =- t^{-\frac 12}\phi^\prime(f)f^\prime\Delta d^*\  \mbox{ in }\  \mathcal N\times (0, + \infty).
\end{equation}
For a positive constant $\Lambda$ to be chosen later, we define the two functions $w^\pm=w^\pm(x,t)$ by
\begin{equation}\label{definition of sub- and supersolutions}
w^\pm(x,t) =\psi(x,t)\pm \Lambda\sqrt{t}\ \mbox{ for }(x,t) \in \overline{\mathcal N}\times (0,+\infty).
\end{equation}
Let us calculate $w^\pm_t-\Delta\phi(w^\pm)$. Since
$$
\phi^\prime(w^\pm)=\phi^\prime(\psi) \pm\Lambda\sqrt{t} \int_0^1\phi^{\prime\prime}(\psi\pm\theta\Lambda\sqrt{t}) d\theta,
$$
we have from \eqref{diffusion equation for approximate solution} that for $(x,t) \in \overline{\mathcal N}\times (0,+\infty)$
\begin{align*}
w^\pm_t-\Delta \phi(w^\pm)&=\psi_t\pm\frac12 \Lambda t^{-\frac12}- \mbox{ div}(\phi^\prime(w^\pm)\nabla\psi)\\
&=- t^{-\frac 12}\phi^\prime(f)f^\prime\Delta d^*\pm\frac12 \Lambda t^{-\frac12}  \mp\Lambda\sqrt{t} \mbox{ div}\left(\int_0^1\phi^{\prime\prime}(\psi\pm\theta\Lambda\sqrt{t}) d\theta\nabla\psi\right).
\end{align*}
Hence, using \eqref{approximate solution near the boundary} with $|\nabla d^*| = 1$ yields that
\begin{align}
&\sqrt{t}\left\{w^\pm_t-\Delta \phi(w^\pm)\right\}\nonumber\\
&\ =-\phi^\prime(f)f^\prime\Delta d^*\label{useful identity for sub- and supersolutions}\\
&\quad\ \pm\Lambda\left\{\frac12-(f^\prime)^2\int_0^1\phi^{\prime\prime\prime}(\psi\pm\theta\Lambda\sqrt{t}) d\theta-(f^{\prime\prime}+\sqrt{t}f^\prime\Delta d^*)\int_0^1\phi^{\prime\prime}(\psi\pm\theta\Lambda\sqrt{t}) d\theta\right\}\nonumber.
\end{align}
Next, we use the Gaussian bounds for the fundamental solutions of diffusion equations due to
Aronson \cite[Theorem 1, p.891]{Ar1967}(see also \cite[p.328]{FS1986}). Let $g = g(x,t;\xi,\tau)$ be the fundamental solution of $w_t=\mbox{div}(\phi^\prime(v)\nabla w)$. Then there exist two positive constants $\kappa< \mathcal K$ depending only on $\delta_1, \delta_2$ and $N$ such that
\begin{equation}
\label{Gaussian bounds}
\kappa\, (t-\tau)^{-\frac N2}e^{-\frac{|x-\xi|^2}{\kappa (t-\tau)}}\le g(x,t; \xi,\tau) \le \mathcal K\, (t-\tau)^{-\frac N2}e^{-\frac{|x-\xi|^2}{\mathcal K (t-\tau)}}
\end{equation}
 for all $(x,t), (\xi,\tau) \in \mathbb R^N\times[0,+\infty)$ with $t > \tau$. Notice that $v$ satisfies
 \begin{equation}
 \label{expression of the solution}
 v(x,t)=\int\limits_D g(x,t; \xi,0)d\xi\ \mbox{ for } (x,t) \in \mathbb R^N\times(0, +\infty).
 \end{equation}
We observe from \eqref{Gaussian bounds}, \eqref{expression of the solution}, the definition of $\psi$, \eqref{bound for f in xi} and \eqref{bound for 1-f in xi} with \eqref{bound of phi f} that there exists two positive constants $A$ and $a$ depending only on $\delta_0, \delta_1, \delta_2$ and $N$ such that
\begin{eqnarray}
&0 < v(x,t) \le Ae^{-\frac at}\ &\mbox{ for every } (x,t) \in \left(\partial \mathcal N\setminus D\right)\times (0,+\infty),\label{exponential decay outside D for u}\\
&0 < \psi(x,t) \le Ae^{-\frac at}\ &\mbox{ for every } (x,t) \in \left(\partial \mathcal N\setminus D\right)\times (0,+\infty),\label{exponential decay outside D for psi}\\
&0 < 1-v(x,t) \le Ae^{-\frac at}\ &\mbox{ for every } (x,t) \in \left(\partial \mathcal N\cap D\right)\times (0,+\infty),\label{exponential decay inside D for u}\\
&0 < 1-\psi(x,t) \le Ae^{-\frac at}\ &\mbox{ for every } (x,t) \in \left(\partial \mathcal N\cap D\right)\times (0,+\infty).\label{exponential decay inside D for psi}
\end{eqnarray}


\begin{lemma}
\label{upper and lower barriers} There exists a positive constant $c=c(\delta_1, \delta_2) \le 1$ depending only on $\delta_1, \delta_2$ such that if $\phi \in C^3(\mathbb R)$ satisfies $\max\limits_{-1\le s\le2}\{|\phi^{\prime\prime\prime}(s)|, |\phi^{\prime\prime}(s)|\} \le c$, then the solution $v$ of problem \eqref{Cauchy problem for bounded D} satisfies
\begin{equation}
\label{new pointwise estimates}
\psi(x,t)-\Lambda\sqrt{t} \le v(x,t) \le \psi(x,t)+\Lambda\sqrt{t}\ \mbox{ for every } (x,t) \in \mathcal N \times (0,+\infty),
\end{equation}
for some positive constant $\Lambda$ depending only on $\delta_0, \delta_1, \delta_2, N$ and $\max\limits_{\overline{\mathcal N}}|\Delta d^*|$.
\end{lemma}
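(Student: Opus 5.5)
The plan is to show that $w^+$ is a supersolution and $w^-$ a subsolution of $w_t=\Delta\phi(w)$ in $\mathcal N\times(0,T]$, and then to apply the comparison principle on the bounded cylinder $\mathcal N\times(0,T]$. The boundary data will be controlled through the exponential bounds \eqref{exponential decay outside D for u}--\eqref{exponential decay inside D for psi}.

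\textbf{Step 1: the differential inequality.} I start from the identity \eqref{useful identity for sub- and supersolutions}. I first restrict to $t\le T_0$ with $\Lambda\sqrt{T_0}\le 1$. Then $\psi\pm\theta\Lambda\sqrt t\in[-1,2]$, so the hypothesis gives $|\phi''|,|\phi'''|\le c$ along the integrals. With $c\le 1$, Proposition \ref{bounds on f} gives $f,f',|f''|\le k=k(\delta_1,\delta_2)$.

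The brace term is then at least
$$\tfrac12-c\bigl(k^2+k+k\max_{\overline{\mathcal N}}|\Delta d^*|\bigr).$$
The factor $\sqrt t\le 1/\Lambda\le 1$ is harmless once $\Lambda\ge 1$. There is one difficulty: $c$ must depend only on $\delta_1,\delta_2$, so the term involving $\Delta d^*$ needs care. I would absorb it by shrinking $T_0$ rather than $c$, requiring $\sqrt{T_0}\max|\Delta d^*|\le 1$. Then it suffices to choose $c$ with $c(k^2+2k)\le\tfrac14$.

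With this choice the brace is at least $\tfrac14$. Hence
$$\sqrt t\{w^+_t-\Delta\phi(w^+)\}\ge -\delta_2 k\max|\Delta d^*|+\tfrac{\Lambda}{4}\ge 0$$
as soon as $\Lambda\ge 4\delta_2k\max|\Delta d^*|$. The same choice gives the reverse inequality for $w^-$.

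\textbf{Step 2: boundary and initial comparison.} At $t=0$ we have $w^\pm=\psi=\mathcal X_D$ in $\mathcal N$ away from $\partial D$, which is a null set, and the comparison principle for bounded weak solutions tolerates this.

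On $(\partial\mathcal N\setminus D)\times(0,T_0]$ I need $v\le\psi+\Lambda\sqrt t$. This follows from $v\le Ae^{-a/t}\le\Lambda\sqrt t$ once $\Lambda\ge\sup_{t>0}At^{-1/2}e^{-a/t}$, which depends only on $\delta_0,\delta_1,\delta_2,N$. Likewise $v\ge\psi-\Lambda\sqrt t$ follows from $\psi\le Ae^{-a/t}$. On $\partial\mathcal N\cap D$ I argue symmetrically with $1-v$ and $1-\psi$.

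Comparison for the quasilinear equation, say via the linear equation satisfied by the difference with coefficient $\int_0^1\phi'(\cdot)\,d\theta\ge\delta_1$, then yields \eqref{new pointwise estimates} for $t\le T_0$.

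\textbf{Step 3: large times, and the main obstacle.} For $t\ge T_0$ the estimate is trivial if $\Lambda\sqrt{T_0}\ge 1$, because $0<v,\psi<1$. So the natural choice is $T_0=\Lambda^{-2}$.

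This creates a circularity. Step 1 required $\sqrt{T_0}\max|\Delta d^*|\le 1$, i.e.\ $\Lambda\ge\max|\Delta d^*|$, which is compatible with the other lower bounds on $\Lambda$. So I take $\Lambda$ to be the maximum of $1$, $\max|\Delta d^*|$, $4\delta_2 k\max|\Delta d^*|$, and the exponential constant from Step 2.

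The main obstacle is precisely this bookkeeping: keeping $c$ independent of $\Delta d^*$ while using smallness of $\sqrt t$. It also requires checking that $\psi\pm\theta\Lambda\sqrt t$ stays in $[-1,2]$, which is why the restriction $\Lambda\sqrt t\le 1$ is imposed.
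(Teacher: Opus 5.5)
Your proposal is correct and follows the paper's proof. The paper likewise shows that $w^\pm$ are super- and subsolutions on $\mathcal N\times(0,\Lambda^{-2}]$, with $c$ fixed through Proposition~\ref{bounds on f}; it controls the lateral boundary by the exponential bounds with $\Lambda\ge A(2ea)^{-1/2}$, and treats $t>\Lambda^{-2}$ as trivial since $0<v,\psi<1$.

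The only difference is how the cross term $\mp\Lambda\sqrt{t}\,f'\Delta d^*\int_0^1\phi''(\psi\pm\theta\Lambda\sqrt{t})\,d\theta$ is handled. The paper regroups it with $-\phi'(f)f'\Delta d^*$ and bounds the combined coefficient by $(\delta_2+c)f'\max_{\overline{\mathcal N}}|\Delta d^*|$, using $\Lambda\sqrt{t}\le 1$. You keep it in the brace and impose the extra condition $\Lambda\ge\max_{\overline{\mathcal N}}|\Delta d^*|$. Both ways keep $c$ independent of $\Delta d^*$.
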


\noindent
{\it Proof.\ } Since $0 < v <1$ and $0 < \psi < 1$, we have
\begin{equation}
\label{new pointwise estimates for large time}
w^{-}(x,t)<0< v(x,t) < 1 < w^+(x,t)\ \mbox{ for every } (x,t) \in \mathcal N \times (\Lambda^{-2},+\infty).
\end{equation}
Thus, it suffices to consider the case where $0 < t \le \Lambda^{-2}$. Then the identity \eqref{useful identity for sub- and supersolutions} is rewritten as
\begin{align}
&\sqrt{t}\left\{w^\pm_t-\Delta \phi(w^\pm)\right\}\nonumber\\
&\ =\left\{-\phi^\prime(f) \mp\Lambda\sqrt{t}\int_0^1\phi^{\prime\prime}(\psi\pm\theta\Lambda\sqrt{t}) d\theta\right\} f^\prime\Delta d^*\label{useful identity for sub- and supersolutions 2nd}\\
&\quad\ \pm\Lambda\left\{\frac12-(f^\prime)^2\int_0^1\phi^{\prime\prime\prime}(\psi\pm\theta\Lambda\sqrt{t}) d\theta-f^{\prime\prime}\int_0^1\phi^{\prime\prime}(\psi\pm\theta\Lambda\sqrt{t}) d\theta\right\}\nonumber.
\end{align}
Proposition \ref{bounds on f} gives a positive constant $c=c(\delta_1, \delta_2) \le 1$ depending only on $\delta_1, \delta_2$ such that if $\phi \in C^3(\mathbb R)$ satisfies $\max\limits_{-1\le s\le2}\{|\phi^{\prime\prime\prime}(s)|, |\phi^{\prime\prime}(s)|\} \le c$, then for every $0 < t \le \Lambda^{-2}$ 
\begin{equation}\label{finding constant c}
\frac12-(f^\prime)^2\int_0^1\phi^{\prime\prime\prime}(\psi\pm\theta\Lambda\sqrt{t}) d\theta-f^{\prime\prime}\int_0^1\phi^{\prime\prime}(\psi\pm\theta\Lambda\sqrt{t}) d\theta \ge \frac 13,
\end{equation}
and moreover, for every $0 < t \le \Lambda^{-2}$, we observe that
\begin{equation}\label{finding constant c 2}
\left|\left\{-\phi^\prime(f) \mp\Lambda\sqrt{t}\int_0^1\phi^{\prime\prime}(\psi\pm\theta\Lambda\sqrt{t}) d\theta\right\} f^\prime\Delta d^*\right| \le (\delta_2+c)f^\prime\max\limits_{\overline{\mathcal N}}|\Delta d^*|.
\end{equation}
Let $\max\limits_{-1\le s\le2}\{|\phi^{\prime\prime\prime}(s)|, |\phi^{\prime\prime}(s)|\} \le c$ and $0 < t \le \Lambda^{-2}$. Then, combining \eqref{useful identity for sub- and supersolutions 2nd}, \eqref{finding constant c}, \eqref{finding constant c 2} with $f^\prime \le k$ from Proposition  \ref{bounds on f} yields that if $\Lambda \ge 3  (\delta_2+c)k\max\limits_{\overline{\mathcal N}}|\Delta d^*|$, 
\begin{equation}\label{differential inequalities}
\pm\left\{w^\pm_t-\Delta \phi(w^\pm) \right\} \ge 0\ \mbox{ in } \mathcal N \times (0, \Lambda^{-2}].
\end{equation}
On the other hand, with the aid of inequality $se^{-as^2}\le (2ea)^{-\frac12}$ for $s >0$ and $a > 0$,  by combining \eqref{exponential decay outside D for u}, \eqref{exponential decay outside D for psi}, \eqref{exponential decay inside D for u} and \eqref{exponential decay inside D for psi}, we observe that if $\Lambda \ge A(2ea)^{-\frac12}$
\begin{equation}\label{on the boundary}
w^-\le v\le w^+\ \mbox{ on }\partial \mathcal N\times(0, \Lambda^{-2}].
\end{equation}
Hence we set $\Lambda=\max\{3  (\delta_2+c)k\max\limits_{\overline{\mathcal N}}|\Delta d^*|, A(2ea)^{-\frac12}\}$. Notice that $\Lambda$ depends only on $\delta_0, \delta_1, \delta_2, N$ and $\max\limits_{\overline{\mathcal N}}|\Delta d^*|$.
Therefore, since $v=w^+=w^-=\mathcal X_D$ on $\mathcal N\times\{0\}$, it follows from \eqref{differential inequalities}, \eqref{on the boundary} and the comparison principle that
$$
w^-\le v\le w^+\ \mbox{ in }\mathcal N\times(0, \Lambda^{-2}].
$$
Consequently, we get the conclusion by combining this with \eqref{new pointwise estimates for large time}. \qed

\setcounter{equation}{0}
\setcounter{theorem}{0}

\section{Proof of Theorem \ref{th: asymptotic formula in time}}
\label{section_Proof_of_Theorem 1.1}

Since the equation we consider is nonlinear, we cannot directly reduce our problem to the case where $\Omega$ is a bounded $C^2$ domain as in \cite[Section 2, pp. 224--225]{Sa2025}.
However, we will be able to circumvent this inconvenience by using Lemma \ref{upper and lower barriers} for two different $C^2$ domains with bounded boundaries, where one is a bounded domain and the other is an exterior domain. See subsection \ref{subsection A local key estimate} below for details.

As in \cite{Sa2025}, we employ in principle  the blow-up arguments due to Ni-Takagi \cite{NT1993} which succeeded to extract the mean curvature of the boundary from the asymptotic behavior of the least-energy solutions of a singularly perturbed semilinear  elliptic Neumann problem.
By straightening the boundary and introducing the scaling related to the small parameter $\varepsilon$ in their singularly perturbed problem, they find the mean curvature of the boundary in the elliptic equation satisfied by the first-order approximation in $\varepsilon$ as $\varepsilon \to 0^+$ (see \cite[(2.4), p.251 and (2.8), p.252]{NT1993}). 

Here we straighten $\partial\Omega$ locally and introduce the standard parabolic scaling with a small positive parameter $\varepsilon$. Then we can find the mean curvature of $\partial\Omega$ in the inhomogeneous diffusion equation \eqref{limit equation 2nd} satisfied by the first-order approximation $S^*$ in $\varepsilon$  as $\varepsilon \to 0^+$ of the scaled solution $v^\varepsilon$. Our proof consists of seven steps.

\subsection{A local key estimate of the solution near the boundary $\partial\Omega$}
\label{subsection A local key estimate}
Choose  $0 < r < \rho$ arbitrarily and set $r_1=\frac {r+\rho}2$. Since $\partial\Omega\cap B_{\rho}(p)$ is of class $C^2$, we may find two bounded domains $\Omega_+, \Omega_-$ of class $C^2$ satisfying
\begin{equation}\label{two bounded domains}
\Omega_+\subset \Omega\cap B_\rho(p),\ \Omega_-\subset (\mathbb R^N\setminus\overline{\Omega})\cap B_\rho(p)\ \mbox{ and }\partial\Omega\cap \overline{B_{r_1}(p)} \subset \partial \Omega_+\cap\partial \Omega_-.
\end{equation}
Setting $D_1=\Omega_+$ and $D_2=\mathbb R^N\setminus \overline{\Omega_-}$ yields that
\begin{equation}\label{monotonicity of three domains}
D_1 \subset \Omega \subset D_2\ \mbox{ and }\partial D_1 \cap \partial D_2 \cap \partial\Omega\supset\partial\Omega\cap \overline{B_{r_1}(p)}.
\end{equation}
Let $v_j=v_j(x,t)\ (j=1,2)$ be the unique bounded solutions of \eqref{Cauchy problem for bounded D} where $D=D_j$. Then, since $\mathcal X_{D_1}\le\mathcal X_{\Omega}\le\mathcal X_{D_2}$, 
it follows from the comparison principle that the solution $u$ of \eqref{Cauchy problem} satisfies
\begin{equation}\label{estimate from above and below}
v_1 \le u \le v_2\ \mbox{ in }\ \mathbb R^N\times (0, +\infty).
\end{equation}
Since $\partial D_j\ (j=1,2)$ are bounded and of class $C^2$, we can apply Lemma \ref{upper and lower barriers} to both domains.
Set  $r_2=\frac {3r+\rho}4$. By choosing the number $\delta_0$ for $D_1$ and for $D_2$ sufficiently small (see \eqref{Neighborhood of the boundary} for the number $\delta_0$), we may allow $\partial D_1$ and $ \partial D_2$ to have the common number $\delta_0$ and the common signed distance function $d^*$ in the intersection of the tubular neighborhoods of $\partial D_1$ and $\partial D_2$ in $\overline{B_{r_2}(p)}$. Thus, by virtue of \eqref{estimate from above and below} and Lemma \ref{upper and lower barriers}, we have
\begin{lemma}\label{A local key estimate from above and below}
There exist $\delta_0 >0$ and $\Lambda > 0$ such that
\begin{equation}\label{A local key estimate}
\psi(x,t) -\Lambda \sqrt{t} \le u(x,t) \le \psi(x,t) + \Lambda\sqrt{t} \mbox{ for every } (x,t) \in (\mathcal N \cap B_{r_2}(p)) \times (0,+\infty),
\end{equation}
where $d^*(x)$ is the signed distance function to the boundary $\partial \Omega$, $\psi$ is given by \eqref{approximate solution near the boundary} and  $\mathcal N$ is given by \eqref{Neighborhood of the boundary}.
\end{lemma}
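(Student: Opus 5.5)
The plan is to sandwich $u$ between $v_1$ and $v_2$ by \eqref{estimate from above and below}, apply Lemma \ref{upper and lower barriers} to each $v_j$, and then check that the two barrier functions coincide with the single $\psi$ built from the signed distance to $\partial\Omega$ on the relevant set. Since $D_1=\Omega_+$ is a bounded $C^2$ domain and $D_2=\mathbb R^N\setminus\overline{\Omega_-}$ is an exterior domain with bounded $C^2$ boundary, Lemma \ref{upper and lower barriers} applies to both. It yields tubular neighborhoods $\mathcal N_j$, signed distances $d_j^*$, profiles $\psi_j=f(t^{-1/2}d_j^*)$ and constants $\Lambda_j$ such that
$$\psi_j-\Lambda_j\sqrt t\le v_j\le \psi_j+\Lambda_j\sqrt t \quad\mbox{ in } \mathcal N_j\times(0,+\infty),\qquad j=1,2.$$
Then $\Lambda=\max\{\Lambda_1,\Lambda_2\}$ should work, provided $\psi_1=\psi_2=\psi$ on $\mathcal N\cap B_{r_2}(p)$.

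The geometric step is therefore to show that $d_1^*=d_2^*=d^*$ on $\mathcal N\cap B_{r_2}(p)$, where $d^*$ is the signed distance to $\partial\Omega$ and $\delta_0$ is small. By \eqref{monotonicity of three domains}, all three boundaries coincide on $\overline{B_{r_1}(p)}$, and $D_1\subset\Omega\subset D_2$, so the signs agree: points of $\Omega$ near this common piece lie in $D_2$, and on the $\Omega$ side they lie in $D_1$ near the piece.

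Take $x\in B_{r_2}(p)$ with $|d^*(x)|<\delta_0$ and $\delta_0<\min\{r_1-r_2,\ \mbox{reach}\}$. The nearest point of $\partial\Omega$ to $x$ lies within distance $\delta_0$ of $x$, hence in $B_{r_1}(p)$, where it is also a boundary point of $D_1$ and $D_2$. The remaining issue is that the nearest point of $\partial D_j$ might lie elsewhere, namely on the parts of $\partial D_j$ away from $\partial\Omega$. This is excluded by taking $\delta_0$ smaller than half the distance from $\overline{B_{r_2}(p)}$ to $(\partial D_j\cup\partial\Omega)\setminus \overline{B_{r_1}(p)}$. That distance is positive, at least $r_1-r_2$, because the pieces of the boundaries outside $\overline{B_{r_1}(p)}$ stay outside $B_{r_1}(p)$.

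It follows that $\mathrm{dist}(x,\partial D_j)=\mathrm{dist}(x,\partial\Omega)$. Since the sign is determined by which side of the common $C^2$ piece $x$ lies on, $d_j^*(x)=d^*(x)$. This also shows that $\mathcal N\cap B_{r_2}(p)\subset \mathcal N_1\cap\mathcal N_2$, and that $d^*$ is $C^2$ there. Combining the two barrier bounds with $v_1\le u\le v_2$ then gives \eqref{A local key estimate}.

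The main obstacle is this uniformity of the tubular neighborhoods. It concerns the choice of $\delta_0$ and of the constructions of $\Omega_\pm$ near $\partial B_{r_1}(p)$, which should be made so that $\partial\Omega_\pm$ leave $\partial\Omega$ only outside $\overline{B_{r_1}(p)}$ with a positive gap from $\overline{B_{r_2}(p)}$. The analytic content is entirely contained in Lemma \ref{upper and lower barriers}, and $\Lambda$ depends on $\max|\Delta d_j^*|$ over the $\mathcal N_j$.
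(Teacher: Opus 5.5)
Your proposal is correct and is essentially the paper's argument. You sandwich $u$ between $v_1$ and $v_2$ by \eqref{estimate from above and below}, apply Lemma \ref{upper and lower barriers} to $D_1$ and $D_2$ with a common small $\delta_0$ so that both signed distances coincide with $d^*$ on $\mathcal N\cap B_{r_2}(p)$, and take $\Lambda=\max\{\Lambda_1,\Lambda_2\}$; your geometric paragraph spells out what the paper compresses into one sentence.

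The extra requirement you place on $\Omega_\pm$, that $\partial\Omega_\pm$ leave $\partial\Omega$ only outside $\overline{B_{r_1}(p)}$, is achievable but not actually needed. Near each point of $\partial\Omega\cap B_{r_1}(p)$, the embedded $C^2$ hypersurface $\partial D_j$ contains, and hence locally coincides with, the graph $\partial\Omega$. So the closure of $\partial D_j\setminus\partial\Omega$ lies at positive distance from the compact set $\partial\Omega\cap\overline{B_{(r_1+r_2)/2}(p)}$, and this is exactly what the paper's ``$\delta_0$ sufficiently small'' absorbs.
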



\subsection{Introducing a principal coordinate system at each point on $\partial\Omega\cap \overline{B_{r}(p)}$}
\label{subsection principal coordinate system}
 Let $q \in \partial\Omega\cap \overline{B_r(p)} (\subset B_\rho(p))$ arbitrarily. 
  For each point $q$, let us introduce a principal coordinate system with the origin $0$ at $q\in\partial\Omega\cap \overline{B_r(p)}$. Since $\partial\Omega\cap B_{\rho}(p)$ is of class $C^2$, we may choose a positive number $r_0$ with $r_0 < r_2-r (=\frac{\rho-r}4)$, which is independent of $q\in \partial\Omega\cap \overline{B_r(p)}$, such that there exists a function $\varphi \in C^2(\mathbb R^{N-1})$ satisfying
$$
B_{r_0}(q)\cap\Omega=\{  x \in B_{r_0}(0) : x_N < \varphi(\hat{x}) \} \mbox{ and } B_{r_0}(q)\cap\partial\Omega=\{ x \in B_{r_0}(0) : x_N=\varphi(\hat{x})\},
$$
where $\delta_0 $ is the positive constant given in Lemma \ref{A local key estimate from above and below}, $\hat{x}=(x_1,\dots,x_{N-1})$ for $x \in \mathbb R^N$, and the norm $\Vert\varphi\Vert_{C^2(\mathbb R^{N-1})}$  is bounded in $q \in \partial\Omega\cap \overline{B_r(p)}$. Note that $B_{r_0}(q) \subset B_{r_2}(p)$. 
Without loss of generality, we may suppose that 
\begin{equation}
\label{principal curvatures}
q=0,  \varphi(0)=0, \nabla\varphi(0)=0\mbox{ and }\varphi(\hat{x})=\frac 12\sum_{j=1}^{N-1}\kappa_j x_j^2 + o(|\hat{x}|^2)\ \mbox{ as } \hat{x} \to 0,
\end{equation}
where $\kappa_j\ (j=1,\dots,N\!-\!1)$ are the principal curvatures of $\partial\Omega$ at $q \in \partial\Omega\cap \overline{B_r(p)}$ with respect to the outward normal direction to $\partial\Omega$.  Note that the mean curvature $H(q)$ of
 $\partial\Omega$ at $q\in\partial\Omega\cap \overline{B_r(p)}$ is given by
 $$
 H(q)=H(0)=\frac 1{N-1}\sum\limits_{j=1}^{N-1}\kappa_j.
 $$ 
 
 
 \subsection{Straightening $\partial\Omega\cap B_{r_0}(q)$ and scaling for blow-up arguments}
 \label{subsection straightening the interface and scaling}
 As used in the boundary regularity theory for elliptic partial differential equations in \cite{ACM2018, E2010}, let us introduce the coordinate transformation $T : (\hat{x}, x_N) \mapsto (\hat{y}, y_N)=(\hat{x}, \varphi(\hat{x})-x_N)$ which straightens $B_{r_0}(q)\cap\partial\Omega$.  Then,  the Jacobian matrix $\nabla T$ is triangular with $\mbox{det}(\nabla T)=-1$,  its inverse transformation is given by $T^{-1}:  (\hat{y}, y_N) \mapsto (\hat{x}, x_N)=(\hat{y},\varphi(\hat{y})-y_N)$, and $\nabla T^{-1}$ is also triangular with $\mbox{det}(\nabla T^{-1})=-1$. We may put 
 $$
 \mathcal B=T(B_{r_0}(0)),\ T(B_{r_0}(0)\cap\Omega)=\{ y \in \mathcal B :  y_N > 0\} \mbox{ and }T(B_{r_0}(0)\cap\partial\Omega)= \{ y \in \mathcal B : y_N = 0\}.
 $$
 Then,  by setting for $(y,t) \in \mathcal B\times [0, +\infty)$ 
$$
 v=v(y,t) = u(T^{-1}y,t)\ \mbox{ and } \mathcal X_T= \mathcal X_T(y_N) = \mathcal X_\Omega(T^{-1}y),
 $$
we see from \eqref{Cauchy problem} and \eqref{positive values} that
\begin{eqnarray}
 &0 < v < 1\ \mbox{ and }\ v_t=\mbox{div}(\phi^\prime(v) A\nabla v)\ &\mbox{ in } \mathcal B \times (0,+\infty), \label{weak solution after straightening}\\
 &v= \mathcal X_{T} &\mbox{ on } \mathcal B \times \{0\}, \label{initial data after straightening}
 \end{eqnarray}
where  $A=A(\hat{y})$ is the symmetric $N\times N$ matrix given by
\begin{equation}
\label{symmetric uniformly elliptic matrix}
A(\hat{y}) = \left[\begin{array}{c|c} I_{N-1}&\nabla \varphi(\hat{y})\\ \hline
{}^t(\nabla\varphi(\hat{y}))&1+|\nabla\varphi(\hat{y})|^2
\end{array}\right]\ \mbox{ and }\ A(0)=I_N
\end{equation}
 with the identity matrix $I_{k}$ of size $k \in \mathbb N$ and the transpose ${}^t(\nabla\varphi(\hat{y}))$ of  $\nabla \varphi(\hat{y})$. 
 Moreover, by setting 
 \begin{equation}\label{scaled initial data}
 \mathcal X^*=\mathcal X^*(\eta)= \begin{cases} 1  \!\!&\mbox{ if }\  \eta >0,\\  0  \!\!&\mbox{ if }\  \eta \le 0,
\end{cases}
\end{equation}
we observe that 
 $$
 \mathcal X_{T}(y_N) = \mathcal X^*(y_N)\ \mbox{  for }y \in \mathcal B.
 $$
Since the norm $\Vert\varphi\Vert_{C^2(\mathbb R^{N-1})}$  is bounded in $q \in \partial\Omega\cap \overline{B_r(p)}$ and $T0=0$, there exists a number $\rho_0 > 0$ being independent of $q \in \partial\Omega\cap \overline{B_r(p)}$ such that $\overline{B_{\rho_0}(0)} \subset \mathcal B$. Let us  introduce the standard parabolic scaling with a small positive parameter $\varepsilon$ by 
$$
(z,s)=(\varepsilon^{-1}\,y, \varepsilon^{-2}t)\ \mbox{ for }(y, t) \in \overline{B_{\rho_0}(0)} \times [0,+\infty).
$$
Remark that $y \in B_{\rho_0}(0)$ if and only if $z\in B_{\varepsilon^{-1}\rho_0}(0)$. For $(z,s) \in B_{\varepsilon^{-1}\rho_0}(0)\times [0, +\infty)$, we set
\begin{equation}\label{scaled solution and initial data}
v^\varepsilon=v^\varepsilon(z,s) = v\!\left(\varepsilon z, \varepsilon^2 s \right),\  A^*=A^*(\hat{z}) = A\!\left(\varepsilon\hat{z}\right)
 \mbox{ and }\ 
\mathcal X^*=\mathcal X^*(z_N),
\end{equation}
where $\mathcal X^*$ is given by \eqref{scaled initial data} and it is invariant under the scaling.
Then it follows from \eqref{weak solution after straightening} and \eqref{initial data after straightening} that
\begin{eqnarray}
 &0 < v^\varepsilon < 1\ \mbox{ and }\ v^\varepsilon_s=\mbox{div}(\phi^\prime(v^\varepsilon) A^*\nabla v^\varepsilon)\ &\mbox{ in } B_{\varepsilon^{-1}\rho_0}(0)  \times (0,+\infty), \label{weak solution after scaling}\\
 &v^\varepsilon= \mathcal X^*&\mbox{ on }  B_{\varepsilon^{-1}\rho_0}(0)\times \{0\}. \label{initial data after scaling}
 \end{eqnarray}

\subsection{Utilizing the interior H\"older estimates}
\label{subsection interior estimates}
Let us  utilize the regularity theory for the second order parabolic equations of divergence form in \cite{Li1996}.  Note that if $n \in \mathbb N$ and  $\varepsilon \le (n+2)^{-1}$, then $\overline{B_{n\rho_0}(0)} \subset B_{(n+1)\rho_0}(0)\subset\overline{B_{(n+1)\rho_0}(0)}\subset B_{\varepsilon^{-1}\rho_0}(0)$. For each $n \in \mathbb N$, we define a cylinder $Q_n$ in $\mathbb R^{N}\!\times\!(0,+\infty)$ by
\begin{equation}
\label{sequence of cylinders}
Q_n= B_{n\rho_0}(0)\times(n^{-1}\rho_0^2, n^{-1}\rho_0^2+n^2\rho_0^2).
\end{equation}
Hence $\overline{Q_{n}}  \subset  Q_{n+1}\subset\overline{Q_{n+1}}\subset B_{\varepsilon^{-1}\rho_0}(0)\times (0, +\infty)$ and $\bigcup\limits_{n=1}^\infty Q_n = \mathbb R^N\times(0,+\infty)$.
 Let $0 < \alpha < 1$. Then,  with the aid of the interior H\"older estimates \cite[Theorem 6.28, p. 129 and Theorem 4.8, pp. 56--57]{Li1996}, we infer that, for every $n \in \mathbb N$ there exists a positive constant $C_n$, which is independent of $q \in \partial\Omega\cap \overline{B_r(p)}$, such that  for every $0 <\varepsilon \le (n+2)^{-1}$
\begin{equation}
\label{interior estimates}
 \Vert v^\varepsilon\Vert_{C^{\alpha,\alpha/2}(\overline{Q_n})} + \Vert\nabla  v^\varepsilon\Vert_{C^{\alpha,\alpha/2}(\overline{Q_n})} \le C_n,
 \end{equation}
 where the norm $ \Vert w \Vert_{C^{\alpha,\alpha/2}(E)}$ for a function $w=w(z,s)$ on $E \subset \mathbb R^{N+1}$  is given by 
 \begin{equation}
 \label{Holder norm definition}
 \Vert w \Vert_{C^{\alpha,\alpha/2}(E)} = \sup_E |w| +\sup_{\substack{(z,s), (\tilde{z},\tilde{s}) \in E\\ (z,s)\not= (\tilde{z},\tilde{s})}}\frac {|w(z,s)-w(\tilde{z},\tilde{s})|}{|z-\tilde{z}|^\alpha+|s-\tilde{s}|^{\alpha/2}}.
 \end{equation}

 \subsection{Getting the first term in the asymptotic formula}
 \label{subsection the first term}
  By virtue of \eqref{interior estimates}, it follows from the Arzel\`a-Ascoli theorem together with the Cantor diagonal process that there exist a sequence $\{\varepsilon_j\}$ with $\lim\limits_{j\to \infty}\varepsilon_j=0$ and a function $v^*$ on $\mathbb R^N\times(0,+\infty)$ which satisfy the following for every $n \in \mathbb N$:
  \begin{eqnarray}
  \{v^{\varepsilon_j}\}_{j\ge n} \mbox{ converges to } v^* \mbox{ as } j \to \infty \mbox{ uniformly on } \overline{Q_n};\ \ &&\label{uniform convergence subsequence 1st}\\
   \{\nabla v^{\varepsilon_j}\}_{j\ge n} \mbox{ converges to } \nabla v^* \mbox{ as } j \to \infty \mbox{ uniformly on } \overline{Q_{n}};\ \ &&\label{uniform convergence subsequence for gradient 1st}
  \\
  \Vert v^*\!\Vert_{C^{\alpha,\alpha/2}(\overline{Q_n})} + \Vert\nabla v^*\!\Vert_{C^{\alpha,\alpha/2}(\overline{Q_n})} \le C_n,\ \ &&\label{estimate for limit function 1st}.
  \end{eqnarray}
   where  $C_n, \alpha$ are the constants given in \eqref{interior estimates}. Moreover, by \eqref{weak solution after scaling}, $v^*$ satisfies
  \begin{equation}
  0 \le  v^*\le 1\ \mbox{ and } v_s^*=\mbox{div}(\phi^\prime(v^*) \nabla v^*)  \ \mbox{ in } \mathbb R^N\times (0,+\infty).\label{limit equation 1st} 
 \end{equation}
 Here  we used the fact that, as $\varepsilon\to 0^+$, $A^*$ converges to $A(0) = I_N$ uniformly on every compact set in $\mathbb R^N$. 
 
 Next lemma shows that $v^*(z,s)$ is explicitly given by $\Psi(z_N,s)$, which is defined by \eqref{one-dimensional self-similar solution},  and it must be the zeroth-order approximation in $\varepsilon$ as $\varepsilon \to 0^+$ of $v^\varepsilon$.
 \begin{lemma}
 \label{the zeroth-order approximation}
 The following assertions hold:
 \begin{itemize}
 \item[\rm (i)] For each $n\in\mathbb N$, as $\varepsilon \to 0^+$, $|v^\varepsilon(z,s)- \Psi(z_N,s)|\le \varepsilon(\Lambda\sqrt{s} +o(1))$ for $(z,s)\in Q_n$;
 \item[\rm (ii)] $v^*(z,s) = \Psi(z_N,s)$ for all $(z,s) \in \mathbb R^N\times(0,+\infty)$;
 \item[\rm (iii)] For each $n\in\mathbb N$, $ \{v^{\varepsilon}\}_{\varepsilon\le (n+2)^{-1}}$ converges to $ v^*$ as  $\varepsilon \to 0^+$ uniformly on $\overline{Q_n}$;
  \item[\rm (iv)]For each $n\in\mathbb N$,  $ \{ \nabla v^{\varepsilon}\}_{\varepsilon\le (n+2)^{-1}}$  converges to $\nabla v^*$ as $ \varepsilon \to 0^+$ uniformly on $\overline{Q_{n}}$,
 \end{itemize}
 where $\Lambda$ is the constant given by Lemma \ref{A local key estimate from above and below} and $\Psi$ is the function given by \eqref{one-dimensional self-similar solution}.
 \end{lemma}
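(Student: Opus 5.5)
The plan is to prove (i) directly from Lemma \ref{A local key estimate from above and below} by rescaling, and then to read off (ii)–(iv) from (i) together with the compactness already established in subsection \ref{subsection the first term}.

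\emph{Step (i).} Fix $n$ and $(z,s)\in Q_n$. Put $y=\varepsilon z$, $t=\varepsilon^2 s$ and $x=T^{-1}y=(\varepsilon\hat z,\varphi(\varepsilon\hat z)-\varepsilon z_N)$. Then $|x|\le C\varepsilon n\rho_0$, so for small $\varepsilon$ the point $x$ lies in $\mathcal N\cap B_{r_2}(p)$, uniformly in $q$. Lemma \ref{A local key estimate from above and below} then gives
\[
|v^\varepsilon(z,s)-f(t^{-\frac12}d^*(x))|\le\Lambda\sqrt t=\varepsilon\Lambda\sqrt s .
\]
It remains to compare $f(t^{-\frac12}d^*(x))$ with $\Psi(z_N,s)=f(s^{-\frac12}z_N)$. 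Near $0$, $\Omega$ lies on the side $x_N<\varphi(\hat x)$ and $d^*$ is positive in $\Omega$. Since $d^*$ is $C^2$ with $|\nabla d^*|=1$ and the normal to $\partial\Omega$ at $x$ is determined by $\nabla\varphi$, we have
\[
d^*(\hat x,\varphi(\hat x)-\eta)=\eta+O(|\hat x|\,\eta)+O(\eta^2).
\]
In particular $d^*(x)=\varepsilon z_N+O(\varepsilon^2)$, with the $O(\cdot)$ uniform on $Q_n$ and in $q$, because $\Vert\varphi\Vert_{C^2}$ is bounded. Hence
\[
t^{-\frac12}d^*(x)=s^{-\frac12}z_N+O(\varepsilon s^{-\frac12}).
\]
On $Q_n$ we have $s\ge n^{-1}\rho_0^2$, and $f'\le k$ by Proposition \ref{bounds on f} (or simply by $f'$ being bounded). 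Therefore the difference is $O(\varepsilon)$. This gives $|v^\varepsilon-\Psi|\le\varepsilon(\Lambda\sqrt s+O(1))$, which is in fact at least as strong as the stated $o(1)$. If one only wants $o(1)$, it is enough to use $d^*(x)/\varepsilon\to z_N$ uniformly, together with the uniform continuity of $f$.

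\emph{Steps (ii)–(iv).} By (i), $v^{\varepsilon}\to\Psi(z_N,s)$ uniformly on each $\overline{Q_n}$ as $\varepsilon\to0^+$, because $\varepsilon\Lambda\sqrt s\le\varepsilon\Lambda\rho_0\sqrt{n^{-1}+n^2}\to0$. Comparing with \eqref{uniform convergence subsequence 1st} identifies $v^*=\Psi(z_N,s)$ on every $Q_n$, hence on all of $\mathbb R^N\times(0,+\infty)$. This proves (ii), and it also proves (iii), since the convergence holds for the whole family and not just along a subsequence.

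For (iv), argue by contradiction. Suppose there are $\epsilon_0>0$ and $\varepsilon_j\to0$ with $\sup_{\overline{Q_n}}|\nabla v^{\varepsilon_j}-\nabla v^*|\ge\epsilon_0$. The estimate \eqref{interior estimates} on $\overline{Q_{n}}$ and Arzel\`a–Ascoli give a further subsequence along which $\nabla v^{\varepsilon_j}$ converges uniformly on $\overline{Q_n}$ to some $G$. Since $v^{\varepsilon_j}\to v^*$ uniformly, necessarily $G=\nabla v^*$, which contradicts the choice of $\epsilon_0$.

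The only step with real content is (i). The main point to check there is that the expansion $d^*(x)=\varepsilon z_N+O(\varepsilon^2)$ holds uniformly in $q\in\partial\Omega\cap\overline{B_r(p)}$, and that $s$ stays bounded away from $0$ on $Q_n$, so that the error $O(\varepsilon s^{-1/2})$ inside $f$ is harmless.
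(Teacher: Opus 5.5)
Your treatment of (ii)--(iv) is fine and matches the paper's compactness argument. Your route for (i), Lemma \ref{A local key estimate from above and below} plus a comparison of $\psi$ with $\Psi$, is also the paper's. As written, however, (i) is not proved.

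The statement requires the remainder to be $o(1)$ \emph{inside} the factor $\varepsilon$, that is, $|v^\varepsilon-\Psi|\le\varepsilon\Lambda\sqrt s+o(\varepsilon)$ uniformly on $Q_n$. From $d^*(x)=\varepsilon z_N+O(\varepsilon^2)$ you only get $\psi(x,t)-\Psi(z_N,s)=O(\varepsilon)$. That gives $\varepsilon(\Lambda\sqrt s+O(1))$, which is weaker than $\varepsilon(\Lambda\sqrt s+o(1))$, not ``at least as strong''. Your fallback (uniform convergence $d^*(x)/\varepsilon\to z_N$ plus uniform continuity of $f$) only gives $\psi-\Psi=o(1)$, which is weaker still.

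This is not cosmetic. The $o(\varepsilon)$ is exactly what yields $|S^\varepsilon|\le\Lambda\sqrt s+o(1)$ and then $|S^*|\le\Lambda\sqrt s$. That bound supplies the zero initial datum needed for the uniqueness of $S^*$. Your crude bound leaves a remainder of size $\varepsilon\,|z|^2s^{-1/2}$ (up to constants), which would lose this.

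The missing idea is the second-order cancellation in the principal coordinate system. Differentiate $d^*(\hat x,\varphi(\hat x))=0$ twice at $0$, and use $\nabla d^*(0)=-e_N$ together with $D^2d^*\,\nabla d^*=0$. This gives $D^2d^*(0)=\mathrm{diag}(\kappa_1,\dots,\kappa_{N-1},0)$, hence $d^*(x)=-x_N+\frac12\sum_j\kappa_jx_j^2+o(|x|^2)$. Now insert $x_N=\frac12\sum_j\kappa_j\varepsilon^2z_j^2-\varepsilon z_N+o(\varepsilon^2)$. The quadratic terms cancel exactly, so $d^*(x)=\varepsilon z_N+o(\varepsilon^2)$ uniformly for $z\in B_{n\rho_0}(0)$.

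Equivalently, you can sharpen your own expansion $d^*(\hat x,\varphi(\hat x)-\eta)=\eta+O(|\hat x|\eta)+O(\eta^2)$:
\begin{itemize}
\item since $\nabla\varphi(0)=0$, the coefficient of $\eta$ is $(1+|\nabla\varphi(\hat x)|^2)^{-1/2}=1+O(|\hat x|^2)$;
\item since $\partial_{NN}d^*(0)=0$ and $d^*\in C^2$, the quadratic remainder is $o(\eta^2)$.
\end{itemize}
So the error is really $O(|\hat x|^2|\eta|)+o(\eta^2)=o(\varepsilon^2)$. Then $t^{-1/2}d^*(x)=s^{-1/2}z_N+o(\varepsilon)$ on $Q_n$, because $s\ge n^{-1}\rho_0^2$ there. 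Together with $f'\le k$ this gives $\psi-\Psi=o(\varepsilon)$, which is what (i) needs.
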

 
 \noindent
 {\it Proof. } (i) together with \eqref{uniform convergence subsequence 1st} yields (ii). Both (iii) ad (iv) follow from (ii) with the aid of the compactness arguments in the beginning of  subsection \ref{subsection the first term}. It remains to prove (i). Let $d^*=d^*(x)$ be the signed distance function to the boundary $\partial\Omega$.
  Let $n \in \mathbb N$ and $0 < \varepsilon \le (n+2)^{-1}$. Then $\overline{B_{n\rho_0}(0)}\subset B_{\varepsilon^{-1}\rho_0}(0)$. Let $d^*=d^*(x)$ be the signed distance function to the boundary $\partial\Omega$. Then $d^*$ satisfies
\begin{equation}
\label{Taylor expansions of the signed distance function}
d^*(x) = -x_N + \frac 12\sum_{j=1}^{N-1}\kappa_j x_j^2 + o(|x|^2) \mbox{ as } x \to 0.
\end{equation}
Since $\hat{x}=\varepsilon\hat{z}$ and $x_N=\varphi\!\left(\varepsilon\hat{z}\right)-\varepsilon z_N$, we observe that, for each $z \in B_{n\rho_0}(0)$,  
\begin{equation}
\label{asymptotics of xN}
x_N=\frac 12\sum_{j=1}^{N-1}\kappa_j\!\left(\varepsilon z_j\right)^2-\varepsilon z_N+o\!\left(\varepsilon^2\right)\ \mbox{ as } \varepsilon \to 0^+.
\end{equation}
Hence,  $|x|^2=\varepsilon^2|z|^2 + O\!\left(\varepsilon^3\right)  \mbox{ as } \varepsilon \to 0^+$, and combining  \eqref{Taylor expansions of the signed distance function} with \eqref{asymptotics of xN} yields that for each $z \in B_{n\rho_0}(0)$
\begin{equation}
\label{a key estimate}
d^*(x) = \varepsilon z_N + o\!\left(\varepsilon^2\right)\ \mbox{ as } \varepsilon \to 0^+.
\end{equation}
Let $(z,s)\in Q_n$. Since $t=\varepsilon^2s$, it follows from \eqref{a key estimate} that
\begin{equation}
\label{distance function with time}
t^{-\frac 12}d^*(x)=\varepsilon^{-1} s^{-\frac 12}(\varepsilon z_N + o\!\left(\varepsilon^2\right)) =s^{-\frac 12}z_N+o(\varepsilon)\ \mbox{ as } \varepsilon \to 0^+.
\end{equation}
Then, by Proposition \ref{bounds on f}  the function $f$ given by \eqref{for selfsimilar solutions} satisfies
\begin{equation}
\label{difference of f}
f(t^{-\frac 12}d^*(x))-f(s^{-\frac12}z_N)=o(\varepsilon)\ \mbox{ as }\varepsilon \to 0^+,
\end{equation}
 which implies that
 \begin{equation}
\label{difference of psi  and Psi}
\psi(x,t)-\Psi(z_N,s)=o(\varepsilon)\ \mbox{ as }\varepsilon \to 0^+,
\end{equation}
where $\psi, \Psi$ are given by \eqref{approximate solution near the boundary}, \eqref{one-dimensional self-similar solution}, respectively. Therefore, since $u(x,t)=v^\varepsilon(z,s)$, we have from Lemma \ref{A local key estimate from above and below} that as $\varepsilon \to 0^+$
$$
|v^\varepsilon(z,s)- \Psi(z_N,s)|\le|u(x,t)-\psi(x,t)|+|\psi(x,t)- \Psi(z_N,s)| \le \Lambda\sqrt{t} + o(\varepsilon) =\varepsilon \Lambda\sqrt{s} +o(\varepsilon),
$$
 which gives (i). \qed


 \subsection{Getting  the second term in the asymptotic formula}
 \label{subsection the second term}
Let us introduce the function $S^\varepsilon=S^\varepsilon(z, s)$ for $(z,s) \in B_{\varepsilon^{-1}\rho_0}(0)  \times (0,+\infty)$ by
\begin{equation}
\label{function for the second mean curvature term}
S^\varepsilon(z,s) = \varepsilon^{-1}(v^\varepsilon(z,s)-v^*(z,s)) \ \left\{ = \varepsilon^{-1}(v^\varepsilon(z,s)-\Psi(z_N,s)) \right\}.
\end{equation}
Then we have from (i) of Lemma \ref{the zeroth-order approximation} that for each $n\in\mathbb N$, as $\varepsilon \to 0^+$, 
\begin{equation}
\label{bounds of the second term}
|S^\varepsilon(z, s)|\le \Lambda\sqrt{s} +o(1)\ \mbox{ for } (z,s)\in Q_n.
\end{equation}

Let us derive the equation which $S^\varepsilon$ satisfies and the asymptotic behavior of $S^\varepsilon$ as $\varepsilon \to 0^+$. Since it follows from \eqref{function for the second mean curvature term} that
$$
v^\varepsilon(z,s) = \varepsilon S^\varepsilon(z,s) + \Psi(z_N,s),
$$
we see from \eqref{weak solution after scaling}  and \eqref{one-dimensional Cauchy} that in $B_{\varepsilon^{-1}\rho_0}(0)\times (0,+\infty)$
\begin{align}
S^\varepsilon_s &=\mbox{div}(\phi^\prime(v^\varepsilon)A^*\nabla S^\varepsilon) 
+ \mbox{div}\left(\int_0^1\phi^{\prime\prime}(\Psi+\theta\varepsilon S^\varepsilon)d\theta\,S^\varepsilon A^*\nabla\Psi\right)\nonumber\\
&\quad+ \mbox{div}(\varepsilon^{-1}\!\phi^\prime(\Psi)(A^*-I_N)\nabla \Psi).\label{weak solution to the second order approximate}
\end{align}
Let us compute in particular the third term of the right-hand side of \eqref{weak solution to the second order approximate}.  We observe that for $(z,s) \in B_{\varepsilon^{-1}\rho_0}(0)\times (0,+\infty)$ 
\begin{eqnarray*}
\nabla \Psi(z_N,s) &=& {}^t[0,\dots,0,\partial_{z_N}\Psi(z_N,s)],\\
A^*(z)-I_N&=&A\!\left(\varepsilon\hat{z}\right)-I_N= \left[\begin{array}{c|c} O&(\nabla \varphi)(\varepsilon\hat{z})\\ \hline
{}^t((\nabla\varphi)(\varepsilon\hat{z}))&|(\nabla\varphi)(\varepsilon\hat{z})|^2
\end{array}\right].
\end{eqnarray*}
Let $n \in \mathbb N, \ \varepsilon \le (n+2)^{-1}$ and $(z,s) \in Q_{n+1}$. Remark that $\overline{B_{(n+1)\rho_0}(0)}\subset B_{\varepsilon^{-1}\rho_0}(0)$.
Since  $\Vert\varphi\Vert_{C^2(\mathbb R^{N-1})}$ is finite and $\nabla\varphi(0)=0$,   for every $j=1,\dots,N\!-\!1$ and every $\varepsilon \le (n+2)^{-1}$, we have the following estimates:
\begin{eqnarray*}
&&\sup_{z \in \overline{B_{(n+1)\rho_0}(0)}}\!\!\varepsilon^{-1}|(\nabla\varphi)(\varepsilon\hat{z})| \le (n+1)\rho_0\, \Vert\varphi\Vert_{C^2(\mathbb R^{N-1})}\\
 &&\mbox{ and } \sup_{z \in \overline{B_{(n+1)\rho_0}(0)}}\!\!\varepsilon^{-1}|\partial_{z_j}\{(\nabla\varphi)(\varepsilon\hat{z})\}|\le \Vert\varphi\Vert_{C^2(\mathbb R^{N-1})}.
\end{eqnarray*}
Thus, we infer that there exists a constant $C_{1,n} >0$, which is independent of $q \in \partial\Omega\cap B_r(p)$, satisfying for every $\varepsilon\le(n+2)^{-1}$
\begin{equation}
\label{Lipschitz norm of coefficients for the equation}
\Vert \varepsilon^{-1}\, (A^*-I_N)\nabla \Psi\Vert_{C^{1,1/2}(\overline{Q_{n+1}})} \le C_{1,n},
\end{equation}
where the norm of the left-hand side means \eqref{Holder norm definition} with $\alpha=1$. Moreover, since $|\phi^{\prime\prime}|\le c$, it follows from \eqref{interior estimates} that there exists
a constant $C_{2,n} >0$, which is independent of $q \in \partial\Omega\cap \overline{B_r(p)}$, satisfying for every $\varepsilon\le(n+2)^{-1}$
\begin{equation}
\label{Holder norm of coefficients for the equation}
\Vert \phi^\prime(v^\varepsilon)A^*\Vert_{C^{\alpha,\alpha/2}(\overline{Q_{n+1}})} \le C_{2,n}\ \mbox {and }\  \Vert \varepsilon^{-1}\, \phi^\prime(v^\varepsilon)(A^*-I_N)\nabla \Psi\Vert_{C^{\alpha,\alpha/2}(\overline{Q_{n+1}})} \le C_{2,n}.
\end{equation}
These take care of the first and  third terms of the right-hand side of \eqref{weak solution to the second order approximate}.
For the second term of the right-hand side of \eqref{weak solution to the second order approximate}, we remark that 
$$
\Psi+\theta\varepsilon S^\varepsilon =\theta v^\varepsilon + (1-\theta)\Psi.
$$
Then, by virtue of \eqref{bounds of the second term} for $n+1$, \eqref{weak solution to the second order approximate}, \eqref{Holder norm of coefficients for the equation} and \eqref{interior estimates}, since  $|\phi^{\prime\prime}|\le c$ and  $|\phi^{\prime\prime\prime}|\le c$, we can again use the interior H\"older estimates \cite[Theorem 4.8, pp. 56--57]{Li1996} to have a constant $C_{3,n} > 0$ being independent of $q \in \partial\Omega\cap \overline{B_r(p)}$ such that for every $\varepsilon\le(n+2)^{-1}$
\begin{equation}
\label{interior estimates for S varepsilon}
 \Vert S^\varepsilon\Vert_{C^{\alpha,\alpha/2}(\overline{Q_n})} + \Vert\nabla S^\varepsilon\Vert_{C^{\alpha,\alpha/2}(\overline{Q_n})} \le C_{3,n},
 \end{equation}
 where  $\alpha\in (0,1)$ is the same number as in \eqref{interior estimates} which is independent of $n\in\mathbb N$ and $q \in \partial\Omega\cap \overline{B_r(p)}$.  Therefore, by virtue of \eqref{interior estimates for S varepsilon}, it follows from the Arzel\`a-Ascoli theorem together with the Cantor diagonal process that there exist a sequence $\{\varepsilon_j\}$ with $\lim\limits_{j\to \infty}\varepsilon_j=0$ and a function $S^*$ on $\mathbb R^N\times (0,+\infty)$ which satisfy the following for every $n \in \mathbb N$:
 \begin{eqnarray*}
  \{S^{\varepsilon_j}\}_{j\ge n} \mbox{ converges to } S^* \mbox{ as } j \to \infty \mbox{ uniformly on } \overline{Q_n}; &&\\
   \{\nabla S^{\varepsilon_j}\}_{j\ge n} \mbox{ converges to } \nabla S^* \mbox{ as } j \to \infty \mbox{ uniformly on } \overline{Q_{n}}; &&
  \\
  \Vert S^*\!\Vert_{C^{\alpha,\alpha/2}(\overline{Q_n})} + \Vert\nabla  S^*\!\Vert_{C^{\alpha,\alpha/2}(\overline{Q_n})} \le C_{3,n}, &&
  \end{eqnarray*}
   where  $C_{3,n}, \alpha$ are the same constants as in \eqref{interior estimates for S varepsilon}.   Moreover, since for $i=1,\dots,N\!-\!1$ 
$$
(\partial_{x_i} \varphi)(\varepsilon\hat{z}) = \sum_{k=1}^{N-1}(\partial_{x_k} \partial_{x_i} \varphi)(0)\varepsilon z_k + o(\varepsilon)
=\kappa_i\varepsilon z_i + o(\varepsilon)\ \mbox{ as } \varepsilon \to 0^+,
$$
$\varepsilon^{-1}\, \phi^\prime(v^\varepsilon)(A^*-I_N)\nabla \Psi$ converges to $\phi^\prime(\Psi)\partial_{z_N}\Psi\,{}^t[\kappa_1z_1,\dots,\kappa_{N-1}z_{N-1}, 0]$ as $\varepsilon \to 0^+$ uniformly  on every compact set in $\mathbb R^N\times (0,+\infty)$. 
Thus we infer from \eqref{bounds of the second term} and \eqref{weak solution to the second order approximate} that in $\mathbb R_N\times(0,+\infty)$
\begin{equation}\label{limit equation 2nd}
S^*_s =\mbox{div}(\phi^\prime(\Psi)\nabla S^*) + \partial_{z_N}(\phi^{\prime\prime}(\Psi)S^*\partial_{z_N}\Psi) +(N-1)H(0)\phi^\prime(\Psi)\partial_{z_N}\Psi,
\end{equation}
and 
  \begin{equation}\label{upper bound of S*}
  |S^*(z,s)| \le \Lambda\sqrt{s}\ \mbox{ for every } (z,s) \in \mathbb R^N\times(0,+\infty),
  \end{equation}
 where  we also used the fact that, as $\varepsilon\to 0^+$, $A^*$ converges to $A(0) = I_N$ uniformly on every compact set in $\mathbb R^N$. These guarantee that, for each $\tau > 0$, $S^*$ must be the unique bounded solution of the Cauchy problem  in $\mathbb R^N\times(0, \tau]$ for equation \eqref{limit equation 2nd} with initial condition $S^*=0$ (see \cite[Theorem 2, p. 639]{Ar1968} for the uniqueness). Furthermore, since $\Psi$ is independent of the variables $z_1,\dots, z_{N-1}$, we infer that $S^*$ is also independent of the variables $z_1,\dots, z_{N-1}$. Indeed,
 if we set $P(z,s) = S^*(z_1+a_1,\dots, z_{N-1}+a_{N-1}, z_N, s)$ for an arbitrary vector $(a_1,\dots. a_{N-1})\in\mathbb R^{N-1}$, then $P$ also satisfies the same Cauchy problem. Hence by the uniqueness $S^*\equiv P$. Thus we may write $S^*=S^*(z_N,s)$. 
 
 Let us distinguish the following two cases: 
 $$
 {\rm (I)}\  H(0)=0;\quad {\rm (II)}\ H(0)\not=0.
 $$
 In case (I), by the uniqueness of the solution of the Cauchy problem, $S^*\equiv 0$. In case (II), by setting $G =  \frac {S^*}{(N-1)H(0)}$ we regard $G$ as the unique bounded solution of the following Cauchy problem for every $\tau > 0$:
 \begin{align}
 G_s &=\mbox{div}(\phi^\prime(\Psi) \nabla G) +\partial_{z_N}(\phi^{\prime\prime}(\Psi)G\partial_{z_N}\Psi)+\phi^\prime(\Psi)\partial_{z_N}\Psi \ &\mbox{ in } \mathbb R^N\times(0, \tau],\\
  G&=0\ &\mbox{ on }  \mathbb R^N\times\{0\},
 \end{align}
 which is independent of the constant $(N-1)H(0)$. Since $\phi^\prime(\Psi)\partial_{z_N}\Psi > 0$, by the maximum principle
 \begin{equation}\label{positive G}
 G > 0 \ \mbox{ in }\ \mathbb R^N\times (0, +\infty).
 \end{equation}
  In particular, we observe that the positive value $G(0,1)$ depends only on $\phi$ and it is of course independent of $N$.
Moreover, observe that the function $g=s\partial_{z_N}\Psi$ satisfies
 \begin{align}
 g_s &=\mbox{div}(\phi^\prime(\Psi) \nabla g) +\partial_{z_N}(\phi^{\prime\prime}(\Psi)g\partial_{z_N}\Psi)+\partial_{z_N}\Psi \ &\mbox{ in } \mathbb R^N\times(0, \tau],\\
  g&=0\ &\mbox{ on }  \mathbb R^N\times\{0\}.
 \end{align}
 Hence, recalling that $\delta_1\le\phi^\prime\le\delta_2$ and utilizing the comparison principle yield that
 \begin{equation}\label{estimate of function G}
\delta_1s\,\partial_{z_N}\Psi\le G\le\delta_2s\, \partial_{z_N}\Psi\ \mbox{ for every } (z_N,s)\in \mathbb R\times (0, +\infty).
\end{equation}
Thus, the positive value $G(0,1)$ is estimated from above and below as
\begin{equation}\label{estimate of positive value G(0,1)}
\delta_1f^\prime(0) \le G(0,1) \le \delta_2f^\prime(0), 
\end{equation}
 and
 $$
 S^*(0,1)=G(0,1) (N-1)H(0).
 $$

 Once the limit function $S^*$ is uniquely determined, the compactness arguments again show that for every $n \in \mathbb N$:
 \begin{eqnarray}
  \{S^{\varepsilon}\}_{\varepsilon\le (n+2)^{-1}} \mbox{ converges to } S^* \mbox{ as } \varepsilon \to 0^+ \mbox{ uniformly on } \overline{Q_n}; &&\label{uniform convergence original sequence 2nd}\\
   \{\nabla S^{\varepsilon}\}_{\varepsilon\le (n+2)^{-1}} \mbox{ converges to } \nabla S^* \mbox{ as } \varepsilon\to 0^+ \mbox{ uniformly on } \overline{Q_{n}}.&&\label{uniform convergence original sequence 2nd gradiet}
\end{eqnarray}
Namely, $S^*$ is regarded as the first-order approximation in $\varepsilon$  as $\varepsilon \to 0^+$ of $v^\varepsilon$.
In particular, by setting $s=1, \varepsilon =\sqrt{t}$ and $z=0$, we observe that 
\begin{equation}
\label{at q on the boundary}
S^\varepsilon(0,1) = t^{-\frac12}\!\!\left(u(0,t)-f(0)\right) \mbox{ and }  S^*(0,1) =G(0,1)(N\!-\!1)H(0).
\end{equation}
Therefore, by setting $G(0,1) = \gamma(\phi)$, we get  the conclusion \eqref{asymptotic formula in time} of Theorem \ref{th: asymptotic formula in time} from \eqref{uniform convergence original sequence 2nd}.

 \subsection{The uniform convergence on $\partial\Omega\cap\overline{B_r(p)}$}
 \label{uniform convergence}
 This subsection is almost the same as \cite[section 4.6]{Sa2025}, but we add this section for the reader's convenience.
 Let us prove  that the convergence  in \eqref{uniform convergence original sequence 2nd} at $(0,1)\in \mathbb R^{N}\times(0,+\infty)$ is uniform on $q \in \partial\Omega\cap\overline{B_r(p)}$, which suffices to prove the last conclusion of  Theorem \ref{th: asymptotic formula in time}. Suppose that this is not the case. Then there exist a number $\eta > 0$, a sequence $\{\varepsilon_m\}$ with $\lim\limits_{m \to \infty}\varepsilon_m=0$, and a sequence $\{ q_m\} \subset \partial\Omega\cap\overline{B_r(p)}$ such that
 \begin{equation}
 \label{the contrary}
 |S^{\varepsilon_m}_{q_m}(0,1) - S^*_{q_m}(0,1)| \ge \eta\ \mbox{ for every } m \in \mathbb N,
 \end{equation}
where $S^\varepsilon_{q_m}, S^*_{q_m}$ mean $S^{\varepsilon}, S^*$, whose $q \in \partial\Omega\cap\overline{B_r(p)}$ is replaced with $q_m\in\partial\Omega\cap\overline{B_r(p)}$, respectively. Recall that at their principal coordinate systems for $q_m\ (m \in \mathbb N)$, the origin $0$ corresponds to each $q_m$. Since $\partial\Omega\cap\overline{B_r(p)}$ is compact, it follows from the Bolzano-Weierstrass theorem that  $\{ q_m\}$ has a convergent subsequence. Thus, for brevity, we may assume that $\{ q_m\}$ itself converges to a point $q_*\in\partial\Omega\cap\overline{B_r(p)}$ as $m\to \infty$. Hence $H(q_m) \to H(q_*)$ as $m \to \infty$, since $\partial\Omega\cap B_\rho(p)$ is of class $C^2$. By the second equality of \eqref{at q on the boundary}, we infer that
\begin{equation}
\label{one step to get a contradiction}
 S^*_{q_m}(0,1) \to  S^*_{q_*}(0,1)=\gamma(\phi)(N-1)H(q_*) \ \mbox{ as } m \to \infty,
\end{equation}
where $S^*_{q_*}$ means $S^*$ whose  $q \in \partial\Omega\cap\overline{B_r(p)}$ is replaced with $q_*\in\partial\Omega\cap\overline{B_r(p)}$.
Observe from \eqref{interior estimates for S varepsilon} that $\{ S^{\varepsilon_m}_{q_m}\}$ may also satisfy the interior estimates: for every $n, m \in \mathbb N$
\begin{equation*}
 \Vert S^{\varepsilon_m}_{q_m}\Vert_{C^{\alpha,\alpha/2}(\overline{Q_n})} + \Vert\nabla S^{\varepsilon_m}_{q_m}\Vert_{C^{\alpha,\alpha/2}(\overline{Q_n})} \le C_{3,n}.\qquad
 \end{equation*}
Then it follows from the Arzel\`a-Ascoli theorem together with the Cantor diagonal process  that there exists a convergent subsequence of $\{ S^{\varepsilon_m}_{q_m}\}$. For brevity, let $\{ S^{\varepsilon_m}_{q_m}\}$ itself converge. In particular, since $q_m \to q_*$ as $m \to \infty$, as in the previous blow-up arguments, we may infer that as $m \to \infty$
 \begin{equation}
 \label{convergence to S_* at q_*}
 \{ S^{\varepsilon_m}_{q_m}\}\mbox{ converges to } S^*_{q_*}\ \mbox{ uniformly on every compact set in }\mathbb R^N\times(0,+\infty).
 \end{equation}
  Hence it follows from \eqref{convergence to S_* at q_*}  that
 \begin{equation}
 \label{the other step to get a contradiction}
 S^{\varepsilon_m}_{q_m}(0,1) \to S^*_{q_*}(0,1)=\gamma(\phi)(N-1)H(q_*) \ \mbox{ as } m \to \infty.
\end{equation}
This together with \eqref{one step to get a contradiction} yields a contradiction to \eqref{the contrary}.

\setcounter{equation}{0}
\setcounter{theorem}{0}

\section{Proofs of Theorems \ref{th:hyperplane} and \ref{th:sphere theorem}}
\label{section_Applications}

Let us prove Theorems  \ref{th:hyperplane} and \ref{th:sphere theorem}.

\noindent
{\it Proof of Theorem \ref{th:hyperplane}.}\ Although the proof is the same as in \cite[Proof of Theorem 1.3]{Sa2025}, for the reader's convenience, we give it because it is short. By Corollary \ref{constant mean curvature}, the mean curvature $H$ of $\partial\Omega$ must be constant. Since $\partial \Omega$ is a entire graph, by the divergence theorem $H$ must vanish.
Hence $\partial\Omega$ is an entire minimal graph over $\mathbb R^{N-1}$. If $N=2$, the mean curvature is simply a curvature, and hence $\partial\Omega$ must be a straight line. If \  $3\le N\le 8$, by the Bernstein theorem for the minimal surface equation (see \cite[Theorem 17.8, p.208]{G1984}), $\partial\Omega$ must be a hyperplane, and   if $\nabla \varphi$ is bounded with $ N \ge 3$, by Moser's theorem\cite[Corollary, p.591]{M1961} (see also \cite[Theorem 17.5, p.205]{G1984}), $\partial\Omega$ must be a hyperplane. \qed

\vskip 2ex

\noindent
{\it Proofs of Theorem \ref{th:sphere theorem}.}\ By Corollary \ref{constant mean curvature} together with Alexandrov's sphere theorem \cite[p.412]{Al1958}, $\Gamma$ must be a sphere. Since either the inside of $\Gamma$ or the outside of $\Gamma$ is included in one of the two sets, $\Omega$ and $\mathbb R^N\setminus\overline{\Omega}$, for instance let us consider the case where the inside $D$ of $\Gamma$ is included  in $\Omega$. Then, by taking into account  the initial data $\mathcal X_\Omega$ and the overdetermined condition \eqref{stationary level surface with one C2 bounded part}, we infer from the uniqueness of the solution of the initial-boundary value problem for the nonlinear diffusion equation $u_t=\Delta \phi(u)$ that, for every $t >0$, $u$ is radially symmetric in $x\in D$ with respect to the center of $\Gamma$. With the aid of translation, we may assume that the center of $\Gamma$ is at the origin. Let $M$ be a $N\times N$ orthogonal matrix, and define the function $v=v(x,t)$ by
\begin{equation}\label{by an orthogonal matrix M}
v(x,t)=u(Mx,t)\ \mbox{ for } (x,t)\in\mathbb R^N\times [0, +\infty).
\end{equation}
Then, $v$ also satisfies the same nonlinear diffusion equation $v_t=\Delta\phi(v)$ in $ \mathbb R^N\times (0, +\infty)$. Set $w=u-v$. Thus, since 
$$
\phi^\prime(u)-\phi^\prime(v) = \int_0^1\phi^{\prime\prime}(v+\theta(u-v))d\theta\, w,
$$ 
by setting $b=b(x,t)= \int_0^1\phi^{\prime\prime}(v+\theta(u-v))d\theta$ we have
\begin{equation}\label{our equation for w}
w_t=\mbox{ div}(\phi^\prime\nabla w) + b\nabla v\cdot\nabla w+\mbox{div}(b\nabla v)w\ \mbox{ in } \mathbb R^N\times (0,+\infty).
\end{equation}
We know that $w\equiv 0$ in $D \times (0, +\infty)$,  and hence we can apply the unique continuation theorems for parabolic operators to $w$  to get 
$$
w\equiv 0\ \mbox{  in } \mathbb R^N\times (0, +\infty).
$$
See \cite[Theorem 1, p. 37]{EF2003} or its stronger versions \cite[Theorem 3, p. 1599]{F2003} and  \cite[Theorem 1, p. 500]{AV2003} for the unique continuation theorems for parabolic operators we used here, and see also \cite{V2008} for a survey on the unique continuation properties for parabolic equations.
Since the matrix $M$ is chosen arbitrarily, for every $t >0$, $u$ is radially symmetric in $x\in\mathbb R^N$. 
Suppose that  there is another component $\Gamma^*$ of $\partial\Omega$. Since $\partial\Omega$ is of class $C^0$,  $\Gamma^*$ must be a sphere having the same center as that of $\Gamma$. Therefore $\Gamma^*$ is of class $C^2$ and hence by  Theorem \ref{th: asymptotic formula in time} with the overdetermined condition \eqref{stationary level surface with one C2 bounded part} the radius of $\Gamma^*$ must be the same as that of $\Gamma$. This is a contradiction. The other cases can be dealt with similarly.  \qed

\bigskip



\begin{thebibliography}{ACM}


\bibitem[AV]{AV2003} G.~Alessandrini and S.~Vessella, Remark of the strong unique continuation property for parabolic operators, Proc. Amer. Math. Soc., \textbf{132} (2003), 499--501.

\bibitem[Al] {Al1958} A.~D.~Alexandrov, Uniqueness theorems for surfaces in the large V, Vestnik Leningrad
Univ. \textbf{13} (19) (1958), 5--8, English translation: Amer. Math. Soc. Transl. Ser. 2, \textbf{21} (1962), 412--416.

\bibitem[ACM]{ACM2018} L.~Ambrosio, A.~Carlotto and A.~Massaccesi, Lectures on Elliptic Partial Differential Equations, Edizioni della Normale Pisa, 2018.

\bibitem[Ar1]{Ar1967} D.~G.~Aronson, Bounds for the fundamental solutions of a parabolic equation,  Bull. Amer. Math. Soc., \textbf{73} (1967), 890--896.

\bibitem[Ar2]{Ar1968} D.~G.~Aronson, Non-negative solutions of linear parabolic equations, Ann. Scuola Norm. Sup. Pisa, \textbf{22} (1968), 607--694.

\bibitem[AP]{AP1974}
F.~V.~Atkinson and L.~A.~Peletier, Similarity solutions of the nonlinear diffusion equation,
Arch. Rational Mech. Anal. \textbf{54} (1974), 373--392. 






\bibitem[D]{Dong2012} H.~Dong, Gradient estimates for parabolic and elliptic systems from linear Lamiates, Arch. Rational Mech Anal., 205 (2012), 119--149.

\bibitem[EF]{EF2003} L.~Escauriaza and F.~J.~Fern\'andez, Unique continuation for parabolic operators, Ark. Mat., \textbf{41} (2003), 35--60.



\bibitem[E1]{E1993} L.~C.~Evans, Convergence of an algorithm for mean curvature motion, Indiana Univ. Math. J., \textbf{42} (1993), 533--557.

\bibitem[E2]{E2010} L.~C.~Evans, Partial Differential Equations, 2nd edition, American Math. Soc., Providence, RI, 2010.


\bibitem[FS]{FS1986} E.~Fabes and D.~Stroock,  A new proof of Moser's parabolic Harnack inequality using the old
ideas of Nash, Arch. Rational Mech. Anal., \textbf{96} (1986), 327--338.

\bibitem[F]{F2003} F.~J.~Fern\'andez, Unique continuation for parabolic operators II, Comm. Partial Differential Equations \textbf{28} (2003), 1597--1604.



\bibitem[G]{G1984} E.~Giusti, Minimal Surfaces and Functions of Bounded Variations, Birkh\"auser, Boston, Basel, Stuttgart, 1984.









\bibitem[Li]{Li1996}G.~M.~Lieberman, Second Order Parabolic Differential Equations, World Scientific, Singapore, New Jersey, London, Hong Kong, 1996.

\bibitem[MS1]{MS2010} R.~Magnanini and S.~Sakaguchi, Nonlinear diffusion with a bounded
stationary level surface, Ann. Inst. Henri Poincar\'e - (C) Anal. Non Lin\'eaire \textbf{27} (2010), 937--952.


\bibitem[MS2]{MS2012} R.~Magnanini and S.~Sakaguchi, Interaction between nonlinear diffusion
and geometry of domain, J. Differential Equations \textbf{252} (2012), 236--257.


\bibitem[Mo]{M1961} J.~Moser, On Harnack's theorem for elliptic differential equations, Comm. Pure Appl. Math. \textbf{14} (1961), 577--591.



\bibitem[NT]{NT1993} W.~M.~Ni and I.~Takagi, Locating the peaks of least-energy solutions to  a semilinear Neumann problem, Duke Math. J., \textbf{70} (1993), 247--281.

\bibitem[Sa1]{Sa2013} S.~Sakaguchi, Stationary level surfaces and Liouville-type theorems characterizing hyperplanes, 
in `` Geometric Properties of Parabolic and Elliptic PDE's ", Springer INdAM Series Vol. \textbf{2} (2013), 269--282.






\bibitem[Sa2]{Sa2025} S.~Sakaguchi, Interaction between initial behavior of temperature and the mean curvature of the interface in two-phase heat conductors, J. Geom.  Anal.,  \textbf{35} (2025), 224.



\bibitem[V]{V2008} S.~Vessella, Quantitative estimates of unique continuation for parabolic equations, determination of unknown time-varying boundaries and optimal stability estimates, Inverse Problems 
\textbf{24} (2008), 023001.






\end{thebibliography}
\end{document}